\tikzset{partition/.style={fill,circle,inner sep=1pt}}
\tikzset{partition/.style={fill,circle,inner sep=1pt},
         part/.style={baseline=0,scale=0.5,bend left=45},
         partlabel/.style={below}}
\tikzstyle{pnt}=[draw,ellipse,fill,inner sep=1pt]
\tikzstyle{opnt}=[draw,ellipse,inner sep=1pt]
\tikzstyle{opnt}=[ ]
\tikzstyle{pntt}=[draw,ellipse,fill,inner sep=0.5pt]
\tikzstyle{point}=[draw,ellipse,fill,inner sep=2pt]
\newcommand{\Hom}{\operatorname{Hom}}
\newcommand{\jmp}{\operatorname{jmp}}
\newcommand{\cent}{\operatorname{Centralizer}}
\DeclareMathOperator{\Tr}{Tr}
\newtheorem{theorem}{Theorem}[section]
  \newtheorem{lemma}[theorem]{Lemma}
  \newtheorem{palgorithm}[theorem]{Separation of Variables (SOV) Approach}
  \newtheorem{corollary}[theorem]{Corollary}
  	\newtheorem{definition}[theorem]{Definition} 
  	\theoremstyle{definition}
  	\newtheorem{example}[theorem]{Example} 
  \theoremstyle{remark}
\subjclass[2000]{05C25, 05E40, 05C38, 13F20}
\begin{document}
\title{The efficient computation of Fourier transforms on semisimple algebras}

\author{David Maslen}
\address{HBK Capital Management, New York, NY 10036}
\email{david@maslen.net}

\author{Daniel N. Rockmore}
\address{Departments of Mathematics and Computer Science, Dartmouth College, Hanover, NH 03755}
\email{rockmore@math.dartmouth.edu}
\thanks{The second author was partially supported by AFOSR Award FA9550-11-1-0166 and the Neukom Institute for Computational Science at Dartmouth College}

\author{Sarah Wolff}
\address{Department of Mathematics, Denison University, Granville, OH 43023}
\email{wolffs@denison.edu}
\thanks{The third author was partially supported by an NSF Graduate Fellowship.}

\subjclass[2000]{To be filled in.}

\date{\today}


\keywords{Fast Fourier Transform, Bratteli diagram, path algebra, quiver}

\begin{abstract}
We present a general diagrammatic approach to the construction of efficient algorithms for computing a Fourier transform on a semisimple algebra. This extends  previous work wherein we derive best estimates for the computation of a Fourier transform for a large class of  finite  groups. We continue to find efficiencies by exploiting a  connection between Bratteli diagrams and the derived path algebra and  construction of Gel'fand-Tsetlin bases. Particular results include highly efficient algorithms for the Brauer, Temperley-Lieb algebras, and Birman-Murakami-Wenzl algebras.  
\end{abstract}

\maketitle

\section{Introduction}\label{in}
In this paper we take up the problem of the efficient computation of a Fourier transform on a finite-dimensional complex semisimple algebra. The work herein is born of earlier generalizations framing the classical ``fast Fourier transform" (FFT)  within the context of a finite group algebra. In this setting, an FFT is  efficient computation of a change of basis in the group algebra, from a basis of point masses on group elements  to a basis of irreducible matrix elements \cite{beth,clausen,MaslenNotices}. The important case of cyclic groups  finds its origins in work of  Gauss (see \cite{cooley,RockmoreIEEE}) and  the family of efficient algorithms for computing the Fourier transform on a finite abelian group (usually known as the {\em discrete Fourier transform} or DFT), is collectively referred to as  ``the FFT". The FFT has been and continues to be the engine of the world of digital signal processing (see e.g., the classic texts \cite{bracewell,rao} as well as references in \cite{RockmoreIEEE}). Applications of nonabelian Fourier transforms (i.e., when the group is nonabelian) can be found in a range of domains including voting theory, filter design, coding theory, and domain reduction for solving PDEs \cite{diaconisspec, kondortracking, munthekaas, wood,rocksurvey}. 


As a linear change of basis in the complex group algebra $\mathbb{C}[G]$ (for a finite group $G$), a Fourier transform algorithm has an obvious upper bound of $|G|^2$ complex operations.\footnote{We use here a standard definition of operation count as a complex addition and multiplication. In various places we may break out the number of additions and multiplications separately, but this will have no effect on the ``big $O$" kinds of results we present here.} Motivated mainly by the many important applications, there is now a large and important body of work -- whose origin is usually traced to the fundamental paper of Cooley and Tukey \cite{cooleytukey} -- showing  that for the case of abelian groups,  a master algorithm for computing the Fourier transform has computational complexity $O(|G|\log |G|)$ \cite{diaconis-fft}. Careful  counting and fine-tuning produce an explicit upper bound of   $8|G|\log_2|G|$ \cite{baumclausentietz}. 

The abelian result has made an $O(|G|\log^c |G|)$ (for any constant $c$, independent of $G$) upper bound  the standard benchmark for high efficiency. While conjectured for an arbitrary finite group, to date this  has been achieved only for several explicit families  in addition to abelian groups. These include the symmetric groups and their wreath products \cite{maslen}, abelian extensions \cite{rockmoreabext}, and supersolvable groups \cite{baum}.  However, recent progress has produced very efficient -- if not to the gold standard -- algorithms for the general linear groups over finite fields along with the Weyl groups of type $B_n$ and $D_n$ \cite{sovi,sovII}.  The groups $SL_2(p)$ are a particularly interesting and thorny  special case, as an $O(|SL_2(\mathbb{F}_p)|\log |SL_2(\mathbb{F}_p)|)$ algorithm could produce an effective fast matrix multiply algorithm \cite{lafferty1992,MR-duco}. The dependence of efficiency on the type of group is implicit in the definition of {\em the complexity of a group}, denoted $C(G)$ for a finite group $G$, and defined as the least upper bound over all choices of complete sets of inequivalent irreducible representations for computing the Fourier transform on $G$. 

A complex finite group algebra is a specific example of a finite-dimensional semisimple algebra. As per the group algebra case, the Fourier transform of a complex semisimple algebra $A$ is a change of basis from some preferred basis to a basis given by irreducible matrix elements. The complexity of $A$, $C(A)$ is the least upper bound of an algorithm effecting such a map and thus bounded above a priori by $\dim(A)^2$. The work presented in this paper, aiming to reduce this naive upper bound,  is both motivated as a next ``natural" step in  algebraic FFT work (see also extensions to the semigroup case \cite{MalandroRockmore,Malandro1,Malandro2}) as well as by a particular application:  the study of a certain random walk on the Birman-Murakami-Wenzl (BMW) algebra  \cite{wolff}. The usefulness of Fourier analysis for studying random walks on finite groups and algebras is well known (see e.g., \cite{diaconisspec, diaram}) and this paper thus connects with that literature as well. 

Herein we show how the {\em separation of variables} (SOV) approach \cite{sovi,sovII} for efficient group Fourier transforms (used to such great effect in the $S_n$, $D_n$, $B_n,$ and $GL_n(q)$ cases) can be applied to derive efficient algorithms in the general semisimple algebra setting. The SOV approach takes advantage of an isomorphism between the path algebra  associated to the Bratelli diagram attached to a subgroup tower to uncover dependencies and redundancies in the calculation of the Fourier transform.

The natural extension of the finite group case would be to find  algorithms for computing the Fourier transform for a complex semisimple algebra $A$ in 
\linebreak
$O(\dim(A) \log^c\dim(A))$ operations. Herein we show that application of the SOV approach to the Brauer, BMW, and Temperley-Lieb algebras produces in the following bounds.

\begin{theorem}\label{brauerthm}
Let $\mathcal{B}r_n$ denote the $(2n-1)!!$-dimensional Brauer algebra. Then 
$$C(\mathcal{B}r_n)\leq (4n^2-n+4)\dim(\mathcal{B}r_n)\sim O(\dim(\mathcal{B}r_n) \log(\dim (\mathcal{B}r_n))^2)
.$$
\end{theorem}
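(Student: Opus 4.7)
The plan is to apply the diagrammatic SOV framework developed in the earlier sections of the paper to the canonical tower of subalgebras $\mathcal{B}r_1 \subset \mathcal{B}r_2 \subset \cdots \subset \mathcal{B}r_n$. The essential structural input is that this tower is multiplicity-free: the branching rule for $\mathcal{B}r_{k-1} \subset \mathcal{B}r_k$ sends an irreducible indexed by a Young diagram $\lambda$ to the direct sum over diagrams obtained by adding \emph{or} removing a single box (reflecting the two types of diagrammatic extensions, transposition and contraction). This gives a canonical Gel'fand--Tsetlin basis indexed by paths in the Brauer Bratteli diagram, and the path algebra isomorphism turns the Fourier transform into a composition of partial transforms, one per level.

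First I would fix, at each level $k$, a set of coset-like generators that lifts a Gel'fand--Tsetlin basis of $\mathcal{B}r_{k-1}$ to one of $\mathcal{B}r_k$; concretely, these are built from the standard Brauer generators $s_1,\ldots,s_{k-1}$ and $e_1,\ldots,e_{k-1}$, and it suffices to handle the ``rightmost'' ones needed to climb between consecutive levels. Next, using the SOV lemma applied in the semisimple-algebra setting, I would express multiplication by each such generator in the Gel'fand--Tsetlin basis as a local, sparse operator controlled by the number of length-two segments in the Bratteli diagram that share an endpoint; this is where the general machinery converts the $\dim(\mathcal{B}r_n)^2$ naive bound into a sum over paths. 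Finally, I would assemble the contributions from all $n$ levels, telescoping to an expression of the form $\bigl(\sum_{k=1}^n c_k\bigr)\dim(\mathcal{B}r_n)$ and checking that $c_k = O(n)$ uniformly, yielding the advertised constant $4n^2 - n + 4$.

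The asymptotic form then follows from Stirling: $\dim(\mathcal{B}r_n) = (2n-1)!! = \Theta\bigl((2n/e)^n\bigr)$, so $\log\dim(\mathcal{B}r_n) = \Theta(n\log n)$ and hence $n^2 = O(\log^2\dim(\mathcal{B}r_n))$, converting the $n^2\dim(\mathcal{B}r_n)$ bound into the $O(\dim(\mathcal{B}r_n)\log^2\dim(\mathcal{B}r_n))$ form stated.

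The main obstacle will be the careful accounting at each level. Unlike the symmetric group tower, where the local multiplication is by a single Jucys--Murphy-type element, in the Brauer case one must handle both the transposition generator $s_{k-1}$ and the contraction generator $e_{k-1}$, whose action on the Gel'fand--Tsetlin basis is controlled by different subdiagrams of the Bratteli diagram (add-box vs.\ remove-box edges). Showing that the contraction moves do not inflate the final count beyond a quadratic-in-$n$ prefactor, and that the resulting constants combine to give exactly $4n^2 - n + 4$ rather than a larger polynomial, is the delicate bookkeeping step where the structure of the Brauer Bratteli diagram must be exploited in detail.
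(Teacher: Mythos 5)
Your plan follows the same route as the paper: climb the tower $\mathcal{B}r_0<\cdots<\mathcal{B}r_n$, pass to the path algebra of the (multiplicity-free) Brauer Bratteli diagram via Gel'fand--Tsetlin bases, factor through level-by-level sets built from the generators $r_i,e_i$, and bound each level by counting subquiver occurrences; your Stirling conversion of $n^2\dim(\mathcal{B}r_n)$ into $O(\dim\log^2\dim)$ is also fine. However, the two steps that actually carry the theorem are missing rather than merely compressed. First, you never exhibit a factor set or prove it is one: the paper takes $Y=R\cup ER$ with $ER=\{r_j\cdots r_{i-1}e_i\cdots e_{n-1}\}$ and shows every Brauer diagram outside $S_n$ factors as $yd'$ with $d'\in Br_{n-1}$ by matching a top-row horizontal edge. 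Your phrase ``coset-like generators that lift a Gel'fand--Tsetlin basis'' glosses over the fact that $Br_n$ is a monoid, not a group, so there are no cosets; one needs the weaker factor-set notion, must allow non-unique factorizations, and must note that computing $\sum f(yb)\,yb$ over all pairs only overcounts, so it still bounds $T_A(R)$. Without this, the reduction of the transform to the sums $\sum_y yF_y$ with $Y_i=\{id,r_i,e_i\}$ is not established.

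Second, the assertion ``$c_k=O(n)$ uniformly, yielding $4n^2-n+4$'' is exactly the content that requires proof, and nothing in your sketch produces it. The paper gets it by bounding, for each $i$, the number of morphisms of the quiver $\mathcal{H}_i^n$ into $\mathcal{B}$ by $\frac{16i-17}{2n-1}\dim(\mathcal{B}r_n)$; this rests on two Brauer-specific combinatorial facts: (a) two \emph{distinct} partitions $\alpha_{i-1}\neq\beta_{i-1}$ admitting a common neighbor at level $i$ jointly determine that neighbor up to at most two choices (whether they partition the same integer or integers differing by $2$), and (b) the jump statistic satisfies $\jmp(\beta_{i-1})^2\leq 2i$, which controls the diagonal terms $\sum_{\beta_{i-1}}(2\jmp(\beta_{i-1})+1)^2 d_{\beta_{i-1}}^2$ arising because restriction both adds and removes boxes. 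Your proposal correctly identifies that the $e_i$ moves are the danger, but offers no argument that the add/remove branching keeps the per-level count linear in $n$; a priori the extra edges could inflate the hom-count and give a worse polynomial. Supplying the factor-set lemma and this $\#\Hom(\mathcal{H}_i^n;\mathcal{B})$ estimate (or equivalents) is necessary before the stated constant, or even the $O(n^2)$ prefactor, can be claimed.
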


\begin{theorem}\label{BMWthm} Let $\mathcal{BMW}_n$ denote the $(2n-1)!!$-dimensional BMW algebra. Then 
$$C(\mathcal{BMW}_n)\leq (4n^2-n+4)\dim(\mathcal{BMW}_n)\sim O(\dim(\mathcal{BMW}_n) \log(\dim (\mathcal{BMW}_n))^2).$$
\end{theorem}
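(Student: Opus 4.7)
The proof strategy is to apply the separation of variables (SOV) approach used for Theorem \ref{brauerthm} essentially verbatim, leveraging the fundamental fact that at generic parameter values $\mathcal{BMW}_n$ is semisimple with the \emph{same} Bratteli diagram as $\mathcal{B}r_n$: irreducibles are indexed by partitions of $n, n-2, n-4, \ldots$, and the restriction from $\mathcal{BMW}_n$ to $\mathcal{BMW}_{n-1}$ follows the same add-or-remove-a-box branching rule as in the Brauer case. Since the SOV complexity bound is driven by the combinatorics of the Bratteli diagram (path counts and branching multiplicities) rather than by the specific structure constants of the algebra, the counting argument from the Brauer proof transfers directly.

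Concretely, I would first invoke the path-algebra and Gel'fand-Tsetlin (GZ) machinery developed in the body of the paper to realize $\mathcal{BMW}_n \cong \bigoplus_\lambda \End(V^\lambda)$ with a basis indexed by pairs of paths in the shared Bratteli diagram. Second, I would factor the Fourier transform through the tower $\mathcal{BMW}_1 \subset \mathcal{BMW}_2 \subset \cdots \subset \mathcal{BMW}_n$ into $n-1$ stages, each performing the change of basis between a $\mathcal{BMW}_k$-adapted GZ basis and a $\mathcal{BMW}_{k+1}$-adapted one. Third, I would bound the cost of each stage by exploiting the block-diagonal structure imposed by the GZ basis and sum the resulting contributions; because the path counts and branching multiplicities match those for $\mathcal{B}r_n$, the totals reproduce the bound $(4n^2-n+4)\dim(\mathcal{BMW}_n)$.

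The main obstacle is not the arithmetic but the verification that the SOV reductions are genuinely combinatorial, so that the identical Bratteli diagram automatically yields the identical bound. One must check that the generators of $\mathcal{BMW}_n$, namely the braid-type generators $g_i$ and the tangle-type generators $e_i$, act on the GZ basis by matrices with the same support pattern as the corresponding Brauer generators, so that the sparse intermediate multiplications used in the Brauer argument remain equally sparse in the BMW setting, with the deformation parameters $r$ and $q$ entering only as scalar factors attached to already-nonzero entries rather than creating new ones. Once this is confirmed via the Nazarov/Leduc-Ram-style formulas for the action of $g_i$ and $e_i$ in the GZ basis, the stage-by-stage operation count from Theorem \ref{brauerthm} carries over term-by-term, giving the claimed bound.
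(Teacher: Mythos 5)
Your proposal matches the paper's argument: the paper likewise deduces Theorem \ref{BMWthm} directly from Theorem \ref{brauerthm} by noting that $\mathcal{BMW}_n$ has a natural basis $\{T_d \mid d\in Br_n\}$ indexed by Brauer diagrams and a Bratteli diagram identical to that of $\mathcal{B}r_n$, so the same factor sets, quivers, and path counts give the same bound. Your extra verification via explicit Leduc--Ram/Nazarov matrix formulas is more than the framework requires, since the needed sparsity is automatic once each factor lies in $\mathbb{C}[\mathcal{B}_{i+1}]\cap\cent(\mathbb{C}[\mathcal{B}_{i-1}])$ (the generators $g_i,e_i$ involve only strands $i,i+1$), but it is a harmless strengthening of essentially the same proof.
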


\begin{theorem}\label{templiebthm}
Let $\mathcal{T}_n$ denote the Temperley-Lieb algebra, with dimension the $n$th Catalan number. Then 
$$C(\mathcal{T}_n)\leq  \frac{n^3+9n^2+8n-12}{6}\dim(\mathcal{T}_n)\sim O(\dim(\mathcal{T}_n) \log(\dim (\mathcal{T}_n))^3) .$$

\end{theorem}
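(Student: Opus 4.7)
The plan is to apply the separation of variables (SOV) framework for general semisimple algebras to the Temperley-Lieb tower $\mathcal{T}_1 \subset \mathcal{T}_2 \subset \cdots \subset \mathcal{T}_n$. The Bratteli diagram of this tower is the Catalan triangle: the vertices at level $k$ are integers $j$ with $0 \leq j \leq k$ and $j \equiv k \pmod{2}$, with edges from $(k,j)$ only to $(k+1,j\pm 1)$. The essential structural feature---responsible for the small polynomial coefficient in front of $\dim(\mathcal{T}_n)$---is that every edge has multiplicity one and every vertex has out-degree at most two, a considerably thinner Bratteli structure than in the Brauer or BMW cases.

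First I would fix the associated Gel'fand-Tsetlin (path) basis and describe the Fourier transform on $\mathcal{T}_n$ as the change of basis from the diagrammatic basis (non-crossing pair partitions on $2n$ points) to the matrix-unit basis coming from the Artin-Wedderburn decomposition, computed with respect to the Gel'fand-Tsetlin basis in each irreducible.

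Next I would set up the SOV recursion: decompose an element of $\mathcal{T}_n$ as a sum of products of elements of $\mathcal{T}_{n-1}$ with a small set of double-coset representatives (essentially the diagrams that encode how strand $n$ can be connected), apply the $\mathcal{T}_{n-1}$-transform inductively to each factor, and glue the results using (i) the branching transition matrices between $\mathcal{T}_{n-1}$- and $\mathcal{T}_n$-Gel'fand-Tsetlin bases and (ii) the action of the extra generator $e_{n-1}$. Because the Catalan triangle has out-degree at most two, the branching step at each level costs only $O(1)$ per path, so $n$ levels of recursion contribute an $O(n)$ multiplicative factor on top of the inductively counted cost.

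The main obstacle, and where most of the work lies, is the careful bookkeeping needed to produce the explicit coefficient $(n^3+9n^2+8n-12)/6$. This requires: pinning down the precise sparsity pattern of the $e_{n-1}$-action in the path basis (where most matrix entries vanish by the multiplicity-one Bratteli structure), summing a quadratic-in-$k$ cost over all levels $k$ (reflecting the number of nonzero path pairs at each level together with the depth of the recursion), and collecting the lower-order corrections from the small Temperley-Lieb cases at the base of the recursion. Once this telescoping argument yields the stated polynomial, the asymptotic bound follows from the Catalan asymptotic $C_n \sim 4^n/(n^{3/2}\sqrt{\pi})$, which gives $n \sim \log_4 \dim(\mathcal{T}_n)$ and hence $n^3\dim(\mathcal{T}_n) \sim \dim(\mathcal{T}_n)\log^3\dim(\mathcal{T}_n)$.
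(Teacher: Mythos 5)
Your overall framework is the one the paper uses---restrict along the tower $\mathcal{T}_n>\mathcal{T}_{n-1}>\cdots>\mathcal{T}_0$, use the factor set coming from how strand $n$ is connected (this is exactly $E\cup\{id\}$ with $E=\{e_i\cdots e_{n-1}\}$, the Jones normal form), pass to the path algebra/Gel'fand-Tsetlin picture, and add up per-level costs via Lemma \ref{factorsum}. But as written there is a genuine gap: everything that actually produces the bound is deferred as ``bookkeeping.'' The paper's proof has two concrete ingredients you never supply. First, each factor-set element $e_i\cdots e_{n-1}$ is itself factored into level-local pieces $y_j\in\{id,e_j\}$ with $\tilde y_j\in\mathbb{C}[\mathcal{B}_{j+1}]\cap\cent(\mathbb{C}[\mathcal{B}_{j-1}])$, and the multiplications are scheduled in a specific order (Theorem \ref{efficiencyfirststate}); this separation of variables inside a single level is the heart of the method, not an implementation detail. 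Second, the cost of each scheduled multiplication is not controlled by local branching data but by the number of occurrences in the Bratteli diagram of the glued quiver $\mathcal{H}_i^n$ of Figure \ref{GnH1}, which involves path multiplicities $M(\beta_{n-1},\beta_{i-1})$ between levels that are far apart; bounding this (Lemma \ref{TnH1} and Corollary \ref{TnH3}, via $\jmp(\beta)^2\le 2i$ and Catalan-dimension identities) is where the coefficient $(n^3+9n^2+8n-12)/6$ comes from. Without that counting, the stated polynomial is only asserted.

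Relatedly, your quantitative heuristic is off: the claim that out-degree at most two makes the branching step cost $O(1)$ per path, so that ``$n$ levels of recursion contribute an $O(n)$ multiplicative factor,'' would give an $O(n\dim(\mathcal{T}_n))$ bound, which contradicts both the theorem you are proving and your own later statement that the per-level cost is quadratic in the level. In the actual accounting the level-$k$ contribution to the reduced complexity is $\sum_{i=2}^{k}\#\Hom(\mathcal{H}_i^k\uparrow\mathcal{Q};\mathcal{B})/\dim(\mathcal{T}_k)$, whose terms grow linearly in $i$, giving roughly $k^2$ per level and hence the cubic total; multiplicity-freeness and small out-degree enter only through the jump estimates inside that homomorphism count. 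The final asymptotic conversion via $C_n\sim 4^n/(n^{3/2}\sqrt{\pi})$ is fine, but the core complexity estimate still needs to be carried out.
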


These theorems could be summarized as saying that the BMW, Brauer, and Temperley-Lieb algebras admit FFTs.

In Section \ref{prelim} we provide the necessary background for our results, defining the Fourier transform on a semisimple algebra, adapted representations, Bratteli diagrams, and Gel'fand Tsetlin bases. In Section \ref{sepvarstate} we introduce and extend the main tools of the SOV approach of \cite{sovII}, providing the definitions of the subsets and quivers that enable the path-counting utilized in the proofs of Theorems \ref{brauerthm}, \ref{BMWthm}, and \ref{templiebthm}. In Section \ref{mainresults} we first provide background and definitions of the Brauer, BMW, and Temperley-Lieb algebras, then prove the complexity results of Theorems \ref{brauerthm}, \ref{BMWthm}, and \ref{templiebthm} using the extended SOV approach. We also provide a general result for semisimple algebras with special subalgebra structure. We conclude in Section \ref{conclusion} with further directions and questions.

This paper necessarily relies on the earlier separation of variables work \cite{sovi,sovII}. It is (regretably) somewhat technical and for reasons of length we cannot reproduce it here in its entirety. The interested reader should see \cite{sovII} for the details of the quiver formulation.

\section{Background}\label{prelim}

\subsection{The Fourier transform of a  semisimple algebra}\label{galgebra}
The usual Fourier transform on a finite group, defined using matrix representations, may be viewed as a special case of a Fourier transform on a semisimple algebra. We work here exclusively in the context of complex semisimple algebras. Recall that a complex algebra is {\em simple} if it is isomorphic to a complex matrix algebra and {\em semisimple} if isomorphic to a finite direct sum of simple algebras. A complex representation of an algebra is an algebra homomorphism $\rho: A\rightarrow M_d(\mathbb{C}),$ where $M_{d}(\mathbb{C})$ denotes the complex algebra of $d\times d$ matrices with entries in $\mathbb{C}$. We call $d$ the {\em{dimension}} of $\rho$.

Results here assume complex representations, unless spelled out otherwise, although most results go through more generally. For necessary background on the representation theory of semisimple algebras we refer the reader to \cite{ram}.



\begin{definition}\label{alg} 
Let $A$ be a semisimple algebra with basis $\{a_i\}_{i\in I}$ and let $\displaystyle f=\sum_{i\in I}f(a_i)a_i$ be the expansion of a given element of $A$ in terms of the basis $\{a_i\}_{i\in I}$.
\begin{itemize}
\item[(i)] Let $\rho$ be a matrix representation of $A$. The \textbf{Fourier transform of} $f$ \textbf{at} $\rho$, denoted $\hat{f}(\rho)$, is the matrix sum
$$\hat{f}(\rho)=\sum_{i\in I} f(a_i)\rho(a_i).$$
\item[(ii)] Let $R$ be a set of matrix representations of $A$. The \textbf{Fourier transform of} $f$ \textbf{on} $R$ is the direct sum of Fourier transforms of $f$ at the representations in $R$:
$$\mathcal{F}_R(f)=\bigoplus_{\rho\in R} \hat{f}(\rho)\in\bigoplus_{\rho\in R} M_{\dim\rho}(\mathbb{C}).$$
 
\end{itemize}
\end{definition}

When we compute the Fourier transform for  a complete set of inequivalent irreducible representations $R$ of $A$  we refer to the calculation as the \textbf{computation of a Fourier transform on $A$} (with respect to $R$).  Notice that this is equivalent to the calculation of the change of basis from $\{a_i\}_{i\in I}$ to the explicit basis given by the evaluation of the matrix elements $\rho^i_{jk}(a_\ell)$ (see Lemma~\ref{equivcomp}). Thus, the definitions depend on explicit choices of bases, both in the initial expansion as well as the target.

\begin{example} When $A=\mathbb{C}[G]$, the complex group algebra of a finite group $G$,with $a_i$ equal to the indicator function that is $1$ on the $i$th element of $G$ and $0$ elsewhere,  Definition \ref{alg} gives the usual definition of the Fourier transform of a function on $G$ \cite{rockmassurvey,sovi,sovII}. Elements of $\mathbb{C}[G]$ are in one-to-one correspondence with complex-valued functions on G, and the Fourier transform of $f:G\rightarrow\mathbb{C}$ at a matrix representation $\rho$ of $G$ is $$\hat{f}(\rho)=\sum_{s\in G} f(s)\rho(s).$$ 
\end{example}

\begin{definition}\label{complexdef} Let $A$ be  a semisimple algebra with basis $\{a_i\}_{i\in I}$ and let $R$ be a set of matrix representations of $A$.
\begin{itemize}
\item[(ii)] Let $+_A(R)$ (respectively, $\times_A(R)$) denote the minimum number of complex arithmetic additions (resp., multiplications)  needed to compute the Fourier transform of $f$ on $R$ via a straight-line program\footnote{A \textbf{straight-line program} is a list of instructions for performing the operations $\times, \div, +, -$ on inputs and precomputed values \cite{algebraiccomplexity}.} for an arbitrary $ f=\sum_{i\in I}f(a_i)a_i$. The \textbf{arithmetic complexity} of a Fourier transform on $R$, denoted $T_A(R)$, is given by $$T_A(R)=\max{(+_A(R), \times_A(R))}.$$
\item[(ii)] The \textbf{complexity of the algebra} $A$, denoted $C(A)$, is given by
$$C(A):=\min_R \{T_A(R)\},$$
where $R$ varies over all complete sets of inequivalent irreducible matrix representations of $A$.
\item[(iii)] The \textbf{reduced complexity}, denoted $t_A(R)$, is given by $$t_A(R)=\frac{1}{\dim(A)}T_A(R).$$
\end{itemize}
\end{definition}
Let $\rho_1,\dots,\rho_m$ be a complete set of inequivalent irreducible matrix representations of an algebra $A$ of dimensions $d_1,\dots,d_m,$ respectively. Direct computation of a Fourier transform would require at most $\dim(A)\sum d_i^2=\dim(A)^2$  arithmetic operations. Rewriting, for a direct computation we have 
$$C(A)\leq T_G(A)\leq \dim(A)^2.$$

\textbf{Fast Fourier transforms} (FFTs) are algorithms for computing Fourier transforms that improve on this naive upper bound.  A priori, the number of operations needed to compute the Fourier transform may depend on the specific representations used. 
\subsection{Fourier Inversion}
A complete set $R$ of inequivalent irreducible matrix representations of a semisimple algebra $A$ determines a basis for $A$ (via the irreducible matrix elements) and in this case the Fourier transform is an algebra isomorphism from $A$ to a direct sum of matrix algebras. We recover $f$ through the Fourier inversion formula, Theorem \ref{planch} below.
\begin{definition} For $A$ a semisimple algebra, a \textbf{trace function} on $A$ is a $\mathbb{C}$-linear function $\tau:A\rightarrow\mathbb{C}$ such that for all $a,b\in A$,
$$\tau(ab)=\tau(ba).$$
\end{definition}
A trace function $\tau$ gives rise to a symmetric bilinear form $\langle \cdot,\cdot\rangle_\tau:A\times A\rightarrow \mathbb{C}$ via $$\langle a,b\rangle_\tau=\tau(ab),$$
for $a,b\in A$.

By linearity the usual trace function on $M_d(\mathbb{C})$ is unique up to multiplication by a constant. Hence, for any trace $\tau$ on $A$ and set $R$ of inequivalent irreducible representations of $A$, there exist constants $t_\rho\in\mathbb{C}$ such that:
$$\tau=\sum_{\rho\in R}t_\rho \Tr(\rho(a)).$$

\begin{theorem}[Fourier Inversion]\label{planch}
Let $A$ be a semisimple algebra with basis $\{a_i\}_{i\in I}$ and $\tau$ a nondegenerate trace on $A$.  Let $\{a_i^*\}$ be the dual basis to $\{a_i\}$ with respect to the trace form $\langle \cdot,\cdot\rangle_\tau$. Then
\begin{equation}
f(a_i)=\sum_{\rho} t_\rho \Tr(\hat{f}(\rho)\rho(a_i^*)).
\end{equation}
\end{theorem}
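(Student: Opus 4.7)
The plan is to verify the inversion formula by a direct calculation, reading the right-hand side backward into the left-hand side. The three ingredients I would use are: the definition $\hat f(\rho)=\sum_{j\in I} f(a_j)\rho(a_j)$; the fact that each $\rho$ is an algebra homomorphism, so $\rho(a)\rho(b)=\rho(ab)$ and hence $\Tr(\rho(a)\rho(b))=\Tr(\rho(ab))$; and the trace decomposition $\tau(x)=\sum_{\rho\in R}t_\rho \Tr(\rho(x))$ recorded just before the theorem.

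First I would substitute the definition of $\hat f(\rho)$, interchange the two sums, and pull the scalars $f(a_j)$ outside, obtaining
\[
\sum_{\rho\in R} t_\rho \Tr\bigl(\hat f(\rho)\rho(a_i^*)\bigr)
\;=\; \sum_{j\in I} f(a_j)\sum_{\rho\in R} t_\rho \Tr\bigl(\rho(a_j a_i^*)\bigr).
\]
The inner sum, by the trace decomposition, is exactly $\tau(a_j a_i^*) = \langle a_j,a_i^*\rangle_\tau$. By the defining property of the dual basis (which exists precisely because $\tau$, and hence $\langle\cdot,\cdot\rangle_\tau$, is nondegenerate), this equals $\delta_{ji}$, so the outer sum collapses to $f(a_i)$, as desired.

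There is no serious obstacle beyond being careful with conventions (the cyclic property $\tau(ab)=\tau(ba)$ ensures the form is symmetric, so the order in the pairing is immaterial). The one point worth highlighting, and the only genuinely non-formal input, is why the scalars $t_\rho$ exist at all: on a semisimple algebra $A\cong\bigoplus_i M_{d_i}(\mathbb{C})$, the space of trace functions is spanned by the characters of a complete set of inequivalent irreducible representations, because each simple summand admits a unique trace up to scalar. This guarantees that any trace $\tau$ can be written as $\sum_\rho t_\rho \Tr\circ \rho$, producing the coefficients in the statement. Once this is in hand, the role of nondegeneracy is purely to produce the dual basis, and the remainder of the proof is the one-line calculation above.
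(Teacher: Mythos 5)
Your proof is correct: substituting the definition of $\hat f(\rho)$, interchanging the sums, and using the decomposition $\tau=\sum_{\rho\in R}t_\rho\Tr\circ\rho$ together with $\langle a_j,a_i^*\rangle_\tau=\delta_{ji}$ is exactly the standard argument for this inversion formula, and you correctly identify the two non-formal inputs (characters spanning the trace functions on a semisimple algebra, and nondegeneracy of $\tau$ furnishing the dual basis). The paper states this theorem without proof, and your calculation is precisely the argument it implicitly relies on, so there is nothing to add.
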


Thus, the Fourier transform of $f$ on $A$ with respect to a complete set of inequivalent irreducible matrix representations $R$ of $A$ is an algebra isomorphism 
$$ A\xrightarrow{\;\;\;\mathcal{F}_R\;\;\;}\bigoplus_{\rho\in R} M_{\dim(\rho)}(
\mathbb{C}).$$ 
\begin{definition}
For $R$ a complete set of inequivalent irreducible matrix representations of $A$, the inverse image under the Fourier transform $\mathcal{F}_R$ of the natural basis of $\bigoplus_{\rho\in R} M_{\dim(\rho)}(\mathbb{C})$ is the \textbf{dual matrix coefficient basis for} $A$ \textbf{associated to} $R$. 

\end{definition}

\begin{lemma}[e.g. \cite{clausen, maslen}]\label{equivcomp}  The computation of the Fourier transform of $ f=\sum_{i\in I}f(a_i)a_i$ on $A$ with respect to a complete set of irreducible  matrix representations $R$ is equivalent to computation (rewriting) of
$$\sum_{i\in I} f(a_i)a_i,$$ relative to the dual matrix coefficient basis for $R$.
\end{lemma}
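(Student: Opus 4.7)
The plan is to observe that because $R$ is a complete set of inequivalent irreducible matrix representations of the semisimple algebra $A$, Theorem \ref{planch} and the preceding discussion give that the Fourier transform
$$\mathcal{F}_R: A \longrightarrow \bigoplus_{\rho\in R} M_{\dim(\rho)}(\mathbb{C})$$
is an algebra isomorphism. The first step is to fix the natural basis of the codomain, namely the collection of matrix units $\{E^{\rho}_{jk}\}_{\rho\in R,\, 1\le j,k\le \dim(\rho)}$, where $E^{\rho}_{jk}$ is the matrix with a $1$ in the $(j,k)$-entry of the $\rho$-block and zeros elsewhere. By definition of the dual matrix coefficient basis, the elements $b^{\rho}_{jk} := \mathcal{F}_R^{-1}(E^{\rho}_{jk}) \in A$ form a basis of $A$.

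Second, I would expand an arbitrary $f\in A$ in this basis as
$$f \;=\; \sum_{\rho\in R}\sum_{j,k} c^{\rho}_{jk}\, b^{\rho}_{jk},$$
for uniquely determined scalars $c^{\rho}_{jk}\in\mathbb{C}$. Applying $\mathcal{F}_R$ and using linearity together with $\mathcal{F}_R(b^{\rho}_{jk}) = E^{\rho}_{jk}$, we obtain
$$\mathcal{F}_R(f) \;=\; \sum_{\rho\in R}\sum_{j,k} c^{\rho}_{jk}\, E^{\rho}_{jk}.$$
On the other hand, Definition \ref{alg} gives $\mathcal{F}_R(f) = \bigoplus_{\rho} \hat f(\rho) = \sum_{\rho,j,k} \hat f(\rho)_{jk}\, E^{\rho}_{jk}$. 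Comparing coefficients on the basis $\{E^{\rho}_{jk}\}$ yields $c^{\rho}_{jk} = \hat f(\rho)_{jk}$ for every $\rho$ and every $j,k$.

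Finally, I would translate this identification into a statement about computations. Starting from the coefficient list $\{f(a_i)\}_{i\in I}$, producing the matrices $\{\hat f(\rho)\}_{\rho\in R}$ is exactly the same data as producing the coefficients $\{c^{\rho}_{jk}\}$ of $f$ in the dual matrix coefficient basis. Since both collections have $\sum_\rho \dim(\rho)^2 = \dim(A)$ entries and each entry of one list equals the correspondingly indexed entry of the other, any straight-line program that computes one computes the other (read off with no arithmetic cost), so the two computational tasks coincide.

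The proof is essentially a bookkeeping argument; the only potential subtlety is the identification of the target basis, which is handled by the definition of the dual matrix coefficient basis as the pullback of the matrix-unit basis under $\mathcal{F}_R$. There is no genuine obstacle beyond correctly unpacking these definitions.
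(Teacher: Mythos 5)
Your proof is correct and is exactly the standard definition-unpacking argument: the paper itself does not prove this lemma (it cites \cite{clausen, maslen}), but the intended justification is precisely your observation that the dual matrix coefficient basis is by definition the preimage of the matrix units under $\mathcal{F}_R$, so the coefficients of $f$ in that basis are literally the entries $\hat f(\rho)_{jk}$. The only point worth making explicit is that the well-definedness of the dual basis rests on $\mathcal{F}_R$ being an isomorphism (Wedderburn/Theorem \ref{planch}), which you do invoke.
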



\subsection{Bratteli diagrams and quivers}\label{gel}
The computational methodology that we present here is a recasting of a divide-and-conquer (or when viewed from the bottom up, a dynamic programming approach) for computing the Fourier transform in terms of {\em graded quivers}, which is an elaboration of the path algebras derived from Bratteli diagrams.  This is a natural extension of the work in \cite{sovII}. Herein we give the necessary definitions and extensions of the needed lemmas. The interested reader should see the original paper for details. 

\begin{definition} For a subalgebra $B$ of a semisimple algebra $A$, a complete set $R$ of inequivalent irreducible matrix representations of $A$ is $\mathbf{B}$\textbf{-adapted} if there exists a complete set $R_B$ of inequivalent irreducible matrix representations of $B$ such that for all $\rho\in R$, $\rho\downarrow_B=\bigoplus \gamma_s$, for (not neccessarily distinct) representations $\gamma_s$ in $R_B$. The set $R$ is \textbf{adapted to the chain} $A=A_n> A_{n-1}>\cdots> A_0$ if for each $1\leq i\leq n$ there is a complete set $R_i$ of inequivalent representations of $A_i$ such that $R_i$ is $A_{i-1}$-adapted and $R_n=R$. A set of bases for the representation spaces that give rise to adapted representations is an \textbf{adapted basis}.
\end{definition}
For the FFT results of this paper we assume the ability to construct adapted sets of representations. This requirement is not a limitation, as any set of representations is equivalent to an adapted set of representations. One such construction is outlined in \cite{sovi}.

\begin{definition} A \textbf{quiver} $Q$ is a directed multigraph with vertex set $V(Q)$ and edge set $E(Q)$. For an arrow (directed edge) $e\in E(Q)$ from vertex $\beta$ to vertex $\alpha$, we call $\alpha$ the \textbf{target}, $t(e)$, of $e$ and $\beta$ the \textbf{source}, $s(e)$, of $e$. 
A quiver $Q$ is \textbf{graded} if there is a function $gr:V(Q)\rightarrow \mathbb{N}$ such that for each $e\in E(Q)$, $gr(t(e))>gr(s(e))$.  
\end{definition}

For $A_i$ a subalgebra of $A_{i+1}$ consider a chain of semisimple algebras $A_n > A_{n-1} > \dots > A_1> A_0 $. To associate a graded quiver to this chain, we follow the language of \cite{seminormal}. Let $\rho$ be an irreducible representation of $A_{i}$, i.e., an irreducible $A_i$-module.  Upon restriction to $A_{i-1}$,  $\rho\downarrow_{A_{i-1}}$ decomposes as a direct sum of irreducible $A_{i-1}$-modules. For $\gamma$ an irreducible representation of $A_{i-1}$, let $M(\rho,\gamma)$ denote the multiplicity of $\gamma$ in $\rho\downarrow_{A_{i-1}}$. 
\begin{definition}\label{brattdef} For a chain of semisimple algebras $A_n > A_{n-1} > \dots > A_1> A_0 $ the \textbf{associated Bratteli diagram} is the graded quiver described by
\begin{itemize}
\item[(i)] The vertices of grading $i$ are labeled by the (equivalence classes of) irreducible representations of $A_i$;

\item[(ii)] A vertex labeled by an irreducible representation $\gamma$ of $A_{i-1}$ is connected to a vertex labeled by an irreducible representation $\rho$ of $A_{i}$ by $M(\rho,\gamma)$ arrows. 
\end{itemize} 

\end{definition}
For a Bratteli diagram $\mathcal{B}$, let $\mathcal{B}^i$ denote the set of vertices of grading $i$ in $\mathcal{B}$.

\begin{example}[Brauer algebras.] Brauer algebras are among the non-group algebras of interest in this paper. We denote the Brauer algebra on $n$ points as $\mathcal{B}r_n>$. (See Section \ref{Brauerback} for a brief description of the Brauer algebra.) 

Irreducible representations of $\mathcal{B}r_i$ are indexed by partitions of $i-2k$, $0\leq k\leq i/2$, with a ``branching rule" like that of the symmetric group where an edge between $\rho\in\mathcal{B}^i$ and $\lambda\in\mathcal{B}^{i-1}$ if $\rho$ is obtained from $\lambda$ by adding or removing a box \cite{leduc-ram}. Figure \ref{BrnBratt} shows the Bratteli diagram for the chain of Brauer algebras $\mathcal{B}r_3>\mathcal{B}r_2>\mathcal{B}r_1>\mathcal{B}r_0$. The grading of the Bratteli diagram is listed at the top. (We distinguish $\mathcal{B}r_1$ from $\mathcal{B}r_0$ for convenience in future indexing so that vertices at level $i$ correspond to representations of $\mathcal{B}r_i$.) The Brauer algebra Bratteli diagram of Figure \ref{BrnBratt} is an example of a \textbf{multiplicity-free} diagram in that there is at most one edge from any vertex of grading $i$ to any vertex of grading $i+1$.
\end{example}

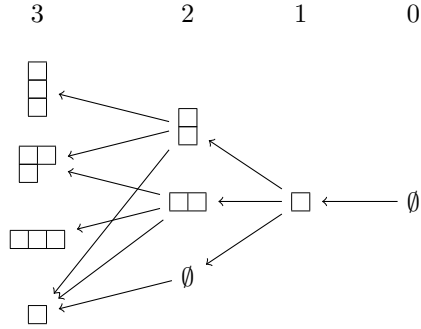
\begin{figure}[ht]\begin{center}
\begin{tikzpicture}[shorten >=1pt,node distance=2cm,on grid,auto,/tikz/initial text=] 
   \node at (-6,2.5) (h) {$\tiny{3}$};
   \node at (-4,2.5) (h) {$\tiny{2}$};
   \node at (-2.5,2.5) (h) {$\tiny{1}$};
   \node at (-1,2.5) (h) {$\tiny{0}$};

   \node at (-6,1.5) (34) {$\tiny{\yng(1,1,1)}$};
   \node at (-6,0.5) (33) {$\tiny{\yng(2,1)}$};
   \node at (-6,-.5) (32) {$\tiny{\yng(3)}$};
   \node at (-6,-1.5) (31) {$\tiny{\yng(1)}$};
   \node at (-4,1) (23) {$\tiny{\yng(1,1)}$};
   \node at (-4,0) (22) {$\tiny{\yng(2)}$};
   \node at (-4,-1) (21) {$\emptyset$};
   \node at (-2.5,0) (1) {$\tiny{\yng(1)}$};
   \node at (-1,0) (0) {$\emptyset$};
   \path[every node/.style={font=\scriptsize}, ->]
    (0) edge  node {} (1)
    (1) edge node {} (21)
    (1) edge node {} (23)
    (1) edge node {} (22)
    (21) edge node {} (31)
    (22) edge node {} (31)
    (22) edge node {} (32)
    (22) edge node {} (33)
    (23) edge node {} (31)
    (23) edge node {} (33)
    (23) edge node {} (34);
    
\end{tikzpicture}
\caption{Bratteli diagram for $\mathcal{B}r_3>\mathcal{B}r_2>\mathcal{B}r_1>\mathcal{B}r_0$}
\label{BrnBratt}
\end{center}\end{figure}
Given a Bratteli diagram $\mathcal{B}$, there is a canonical chain of algebras associated to $\mathcal{B}$ called the {\em chain of path algebras}. For more details, see e.g. \cite{towers, braidgroups}. 

\begin{definition} Let $\mathcal{B}$ be a Bratteli diagram. The \textbf{path algebra (at level i)}, denoted $\mathbb{C}[\mathcal{B}_i]$, is the $\mathbb{C}$-vector space with basis given by ordered pairs of paths  of length $i$ in $\mathcal{B}$ which start at the root and end at the same vertex at level $i$. 
\end{definition}

Note that for a vertex $v$, labeled by a representation $\rho$, the dimension of $\rho$ is given by the number of paths from the root to $v$. Moreover, each path corresponds to a subgroup-equivariant embedding of $\mathbb{C}$ into the representation space of $\rho$ (for more details, see \cite{towers, sovII}).

Further, $\mathbb{C}[\mathcal{B}_i]$ embeds into $\mathbb{C}[\mathcal{B}_{i+1}]$ as a subalgebra by mapping any pair of paths $(P,Q)\in\mathbb{C}[\mathcal{B}_i]$ to the sum $$\sum_e (e\circ P, e\circ Q),$$
over all arrows $e$ such that the source of $e$ is the target of $P$ (equivalently, of $Q$), and $\circ$ denotes concatenation of paths. Thus, elements in these subalgebras are effectively determined by the initial ``legs" of their paths. This is also equivalent to a choice of basis  in the corresponding Wedderburn decomposition of the algebra as a direct sum of matrix algebras, recognizing that for a given element, a number of irreducible matrix elements will take on the same value (equal to the total number of distinct paths that have the common middle ``source" the target of $P$). Identification of this kind of common ``unit" (formalized by the injection of one quiver into another) is the fundamental observation and technique of the quiver-based SOV approach. 

Multiplication in the path algebra $\mathbb{C}[\mathcal{B}_i]$ is the linear extension of $(P,Q)*(P',Q')=\delta_{QP'}(P,Q')$
and is illustrated in Figure \ref{multinpath}. The first arrow represents gluing two pairs of paths along identical middle paths $Q=P'$ and the second arrow represents summation over all possible gluings.
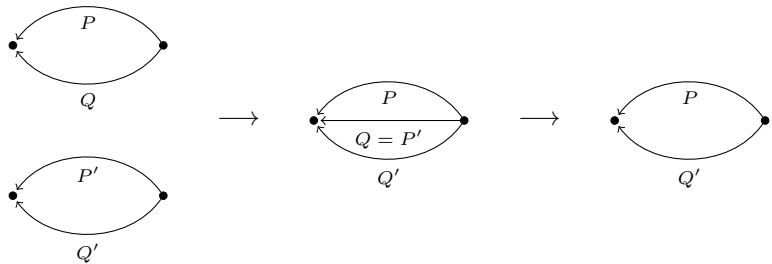
\begin{figure}[H]\begin{center}
\begin{tikzpicture}[shorten >=1pt,node distance=2cm,on grid,auto,/tikz/initial text=] 
   \node[pnt] at (-3,0) (q_0) {};
   \node[pnt] (q_2) [left of=q_0] {};
   \node[pnt] (q_1) [below of=q_0] {};
   \node[pnt] (q_3) [below of=q_2] {};
   \node at (-2,-1)(y1){$\longrightarrow$};
   \node[pnt] at (-1,-1) (y4) {};
   \node[pnt] (y5) [right of=y4] {};
   \node at (2,-1)(y6){$\longrightarrow$};
   \node[pnt] at (3,-1) (y8) {};
   \node[pnt] (y9) [right of=y8] {};
   \path[->,every node/.style={font=\scriptsize}]
    (q_0) edge [bend left=55] node {$Q$} (q_2)
    (q_0) edge [bend right=55] node {$P$} (q_2)
    (q_1) edge [bend left=55] node {$Q'$} (q_3)
    (q_1) edge [bend right=55] node {$P'$} (q_3)
    (y5) edge [bend left=55] node {$Q'$} (y4)
    (y5) edge  node {$Q=P'$} (y4)
    (y5) edge [bend right=55] node {$P$} (y4)
    (y9) edge [bend left=55] node {$Q'$} (y8)
    (y9) edge  [bend right=55] node {$P$} (y8);    
\end{tikzpicture}
\caption{Multiplication in the path algebra.}
\label{multinpath}
\end{center}
\end{figure}

For a Bratteli diagram $\mathcal{B}$ associated to a chain of semisimple algebras $A_n>A_{n-1}>\cdots >A_0$, consider the associated chain of path algebras:
$\mathbb{C}[\mathcal{B}_n]>\mathbb{C}[\mathcal{B}_{n-1}]>\cdots> \mathbb{C}[\mathcal{B}_1]>\mathbb{C}[\mathcal{B}_0].$
It is not too difficult to see that there exists an isomorphism between these algebra chains.
\begin{lemma}\label{bratchain}Let $A=A_n > A_{n-1} > \dots > A_1> A_0$ be a chain of semisimple algebras with Bratteli diagram $\mathcal{B}$. Then the chain of path algebras associated to $\mathcal{B}$ is isomorphic to the group algebra chain.
\end{lemma}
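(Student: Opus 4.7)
The plan is to construct the isomorphism level by level and then check compatibility with the chain inclusions. At a fixed level $i$, Wedderburn's theorem gives $A_i \cong \bigoplus_{\rho} M_{d_\rho}(\mathbb{C})$, where $\rho$ ranges over the equivalence classes of irreducible representations of $A_i$ and $d_\rho = \dim \rho$. On the path-algebra side, fix a vertex $v$ at level $i$ labeled by $\rho$: by induction on $i$ the number of paths from the root to $v$ equals $d_\rho$ (since restricting along the chain decomposes the representation, and the Bratteli arrows record these multiplicities). The multiplication rule $(P,Q)*(P',Q') = \delta_{Q,P'}(P,Q')$ is exactly the multiplication of matrix units, so the span of pairs $(P,Q)$ terminating at $v$ is an algebra isomorphic to $M_{d_\rho}(\mathbb{C})$, and summing over vertices at level $i$ yields $\mathbb{C}[\mathcal{B}_i] \cong A_i$ as algebras.

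To make this isomorphism canonical in a way compatible with the chain, I would fix an adapted set $R = R_n$ of irreducible matrix representations of $A_n$, giving $R_i$ at each level. In an adapted basis, each path $P = (v_0, v_1, \dots, v_i)$ from the root to a vertex $v_i$ labeled by $\rho$ picks out, in each intermediate representation space, a specific $A_j$-equivariant embedding $\mathbb{C} \hookrightarrow$ (space of the irreducible indexed by $v_j$). This identifies the set of paths ending at $v_i$ with a basis of the representation space of $\rho$, and the pair $(P,Q)$ with the corresponding matrix unit. With this identification in place, the isomorphism $\Phi_i : \mathbb{C}[\mathcal{B}_i] \to A_i$ is defined unambiguously.

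The heart of the argument, and the main obstacle, is to show that the embeddings match: the inclusion $\mathbb{C}[\mathcal{B}_i] \hookrightarrow \mathbb{C}[\mathcal{B}_{i+1}]$ given by
\[
(P,Q) \longmapsto \sum_{e : s(e) = t(P)} (e\circ P,\, e\circ Q)
\]
must intertwine with the subalgebra inclusion $A_i \hookrightarrow A_{i+1}$ under $\Phi_i$ and $\Phi_{i+1}$. By definition of a $B$-adapted set of representations, when $\rho \in R_{i+1}$ is restricted to $A_i$ it decomposes as $\rho\!\downarrow_{A_i} \cong \bigoplus_\gamma \gamma^{\oplus M(\rho,\gamma)}$, and the multiplicity $M(\rho,\gamma)$ is exactly the number of Bratteli arrows from $\gamma$ to $\rho$. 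Under $\Phi_{i+1}$, the image of an element of $A_i$ thus acts block-diagonally in this decomposition, with one identical copy for each arrow $e$ into $\rho$ whose source is $\gamma$. The sum over arrows $e$ in the path-algebra inclusion is exactly this ``copy in each isotypic slot'' operation, so the diagram commutes level by level.

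Stringing these commuting squares together for $i = 0, 1, \dots, n$ yields the claimed isomorphism of chains. The routine but slightly delicate part is the bookkeeping in the last paragraph: one must check that the identification of paths with adapted basis vectors truly sends the ``concatenate arrow $e$'' operation on paths to the canonical projection onto the isotypic $\gamma$-summand indexed by $e$. Once that is spelled out, compatibility with all inclusions, and hence the lemma, is immediate.
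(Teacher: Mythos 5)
Your argument is correct and is essentially the argument the paper has in mind (and defers to \cite{towers, sovII}): pairs of paths ending at a common vertex are matrix units for the Wedderburn blocks, an adapted (Gel'fand--Tsetlin) system identifies paths with basis vectors so that each $\Phi_i$ is canonical, and adaptedness makes the restriction literally block-diagonal so that the ``sum over arrows'' embedding of path algebras matches the subalgebra inclusion. Your closing bookkeeping point---that concatenation by an arrow $e$ corresponds to the copy of $\gamma$ indexed by $e$ inside the isotypic component---is exactly the content of the paper's remark that each path gives an equivariant embedding of $\mathbb{C}$ into the representation space, so nothing essential is missing.
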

Lemma \ref{bratchain} the key translation in the group algebra case \cite{sovII} as it allows for computation of the Fourier transform to be reformulated in the path algebra. As this result holds in the semisimple algebra setting, we can extend the SOV approach to any semisimple algebra.

\subsection{Gel'fand-Tsetlin bases} The analogous concept in the path algebra of adapted bases associated to a group algebra chain is a \textit{system of Gel'fand-Tsetlin bases}.

\begin{definition}\label{defgel} Let $\mathcal{B}$ be the Bratteli diagram associated to a chain of group algebras. A \textbf{system of Gel'fand-Tsetlin bases for} $\mathbf{\mathcal{B}}$ consists of a collection of bases for the representation spaces $\{V_\alpha\vert\;\alpha\in V(\mathcal{B})\}$ of the representations corresponding to $\alpha$ indexed by paths from the root to $\alpha$, along with maps from the paths to the basis vectors; i.e., a set of basis vectors along with knowledge of the path corresponding to each vector.
\end{definition}

Gel'fand-Tsetlin bases provide a means to better understand the isomorphism of Lemma \ref{bratchain} between a chain of semisimple algebras and the corresponding chain of path algebras. Since Gel'fand-Tsetlin bases are indexed by paths in $\mathcal{B}$ and a basis for the path algebra $\mathbb{C}[\mathcal{B}_n]$ consists of pairs of paths, we identify the semisimple algebra $A$ with its realization in coordinates relative to the Gel'fand-Tsetlin basis, indexed by pairs of paths of length $n$ in $\mathcal{B}$ that share the same endpoint. For a complete set $R$ of inequivalent irreducible representations of $A$ adapted to the chain $A=A_n>A_{n-1}>\dots>A_0$ let $D_i$ be the dual matrix coefficient basis for $A_i$ associated to $R$. For $a\in A$ let $\tilde{a}$ be the image of $a$ in the path algebra under the path algebra isomorphism. Note that the image of the set of dual matrix coefficient bases $D_i$ is a Gel'fand Tsetlin basis. Then Lemma \ref{equivcomp} becomes
\begin{lemma}\label{equivcomp2}  The computation of the Fourier transform of $f$ (originally expressed with respect to a basis $\{a_i\}_{i\in I}$) on a semisimple algebra $A$ with respect to a complete set of inequivalent irreducible representations $R$ adapted to the chain $A_n>A_{n-1}>\dots >A_0$ is the same as computation (rewriting) of $$\sum_{i\in I}f(a_i)\tilde{a}_i,$$  relative to a Gel'fand Tsetlin basis for the path algebra associated to the chain.
\end{lemma}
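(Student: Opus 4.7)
The plan is to deduce Lemma \ref{equivcomp2} from Lemma \ref{equivcomp} by transporting the change-of-basis problem across the isomorphism of Lemma \ref{bratchain}. First I would apply Lemma \ref{equivcomp} to the top of the chain: computing $\mathcal{F}_R(f)$ is equivalent to re-expressing $\sum_{i\in I} f(a_i) a_i$ in the dual matrix coefficient basis $D_n$ of $A = A_n$ associated to $R$. Because $R$ is adapted to the full chain $A_n > A_{n-1} > \cdots > A_0$, one obtains compatible dual matrix coefficient bases $D_i$ at every level, and these are exactly the data that the path algebra encodes combinatorially.

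Next I would invoke the algebra isomorphism $\varphi: A \xrightarrow{\sim} \mathbb{C}[\mathcal{B}_n]$ of Lemma \ref{bratchain} and track what it does on bases. By definition $\tilde{a}_i = \varphi(a_i)$, so
\begin{equation*}
\varphi\Bigl(\sum_{i\in I} f(a_i) a_i\Bigr) = \sum_{i\in I} f(a_i) \tilde{a}_i.
\end{equation*}
Since $\varphi$ is a $\mathbb{C}$-linear bijection, any straight-line program that rewrites an element of $A$ in the basis $D_n$ transports to a straight-line program of the same arithmetic cost that rewrites the image in $\mathbb{C}[\mathcal{B}_n]$ relative to the image basis $\varphi(D_n)$, and conversely. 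Thus the two computational problems have identical arithmetic complexity.

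The remaining step, and the only one that requires genuine content beyond linearity, is the identification of $\varphi(D_n)$ with a system of Gel'fand-Tsetlin bases as defined in Definition \ref{defgel}. Here one uses that $R$ is adapted to the chain, so each irreducible representation of $A_i$ appearing in $R_i$ decomposes compatibly upon restriction to $A_{i-1}$; the basis vector indexed by a chain of irreducibles $\rho_n \downarrow \rho_{n-1} \downarrow \cdots \downarrow \rho_0$ corresponds under $\varphi$ to a path in $\mathcal{B}$ whose vertex at level $i$ is $\rho_i$, and matrix coefficients (pairs of such vectors with common top vertex) correspond to pairs of paths with common endpoint --- exactly the basis of $\mathbb{C}[\mathcal{B}_n]$. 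This bijection between adapted basis vectors and paths is precisely the Gel'fand-Tsetlin data, so $\varphi(D_n)$ is a Gel'fand-Tsetlin basis, and Lemma \ref{equivcomp2} follows from Lemma \ref{equivcomp} via $\varphi$.

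The main (very mild) obstacle is making explicit the correspondence between adapted matrix coefficient bases and pairs of paths; once one accepts the standard dictionary recalled in Section \ref{gel} between subalgebra-equivariant embeddings $\mathbb{C}\hookrightarrow V_\rho$ and paths from the root to $\rho$ in $\mathcal{B}$, the lemma is essentially a translation statement.
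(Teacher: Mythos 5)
Your proposal is correct and follows essentially the same route as the paper: the paper's (very brief) justification is precisely to invoke the isomorphism of Lemma \ref{bratchain}, observe that the image of the dual matrix coefficient bases $D_i$ under that isomorphism is a Gel'fand-Tsetlin basis, and then restate Lemma \ref{equivcomp} in path-algebra coordinates. Your write-up just makes explicit the cost-preservation under the linear bijection and the adapted-basis/path dictionary, both of which the paper leaves implicit.
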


\section{The Separation of Variables Approach}\label{sepvarstate}

At the heart of the SOV approach are two main steps. The first is to express a path algebra element as a factorization over subsets of the Bratteli diagram in such a way as to disentangle the dependencies in the sum. To extend to the semisimple algebra setting, we need a coset-like factorization of basis elements of $A$.
\begin{definition} Let $A$ be a semisimple algebra with basis $\hat{A}$ and $B$ a subalgebra with basis $\hat{B}$. A \textbf{factor set} for $A$ over $B$ is a set $Y\subseteq \hat{A}$ such that each basis element of $A$ can be written as $yb$, with $y\in Y$ and $b\in\hat{B}$. 
\end{definition}

Note that a factor set is weaker than the group notion of a set of coset representatives. However, since $\hat{A}\subseteq\{yb\mid y\in Y, b\in \hat{B}\}$, computation of $\sum f(yb)yb$ requires more operations (or the same number) as computation of $\sum f(a_i)a_i$, so we may use it to bound computation of the Fourier transform.  

For $Y$ a factor set of $A$ over $B$, $\tilde{Y}=\{\tilde{y}\mid y\in Y\}$, and $F_y$ (for each $y\in Y$) an arbitrary element in the path algebra of $B$, define  
$$m_A(R,Y,B)=\frac{1}{\dim(A)}\times
    \left\{
     \begin{array}{llll}
        \text{minimum number of operations required to compute }\\
    \sum_{y\in \tilde{Y}} yF_y\text{ in a system of Gel'fand-Tsetlin bases for } \mathcal{B}.\\
     \end{array} 
   \right. $$

\begin{lemma}\label{factorsum} 

Let $B$ be a subalgebra of $A$, $R$ a complete $B$-adapted set of inequivalent irreducible matrix representations of $A$,  and $Y$ a factor set for $A$ over $B$. Then
$$t_A(R)\leq t_B(R_{B})+m_A(R,Y,B).$$
\end{lemma}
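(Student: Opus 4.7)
The plan is to generalise to semisimple algebras the coset-decomposition argument used for group Fourier transforms. Given $f\in A$, the strategy is (i) to rewrite $f$ along the factor set $Y$ so that each summand lies in the subalgebra $B$, (ii) to Fourier transform each summand on $R_B$, and (iii) to combine the resulting elements inside the path algebra of $A$ in Gel'fand--Tsetlin coordinates.

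For step (i), the factor-set property says every basis element $a\in\hat A$ admits a representation $a=yb$ with $y\in Y$ and $b\in\hat B$. Choosing such a representation for each $a$ allows us to regroup, at no arithmetic cost,
$$f=\sum_{i\in I}f(a_i)a_i=\sum_{y\in Y}y\,F_y,\qquad F_y=\sum_{b\in\hat B}c_{y,b}\,b\in B,$$
where the coefficients $c_{y,b}$ are read directly from the input values $\{f(a_i)\}$. For step (ii), I would run the optimal algorithm for $R_B$ separately on each $F_y$, using at most $T_B(R_B)$ operations per $y$ and hence at most $|Y|\cdot T_B(R_B)=\dim(A)\cdot t_B(R_B)$ in total (using $|Y|\cdot\dim(B)=\dim(A)$ for a minimal factor set). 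Because $R$ is $B$-adapted, Lemma~\ref{bratchain} identifies each $\hat F_y$ with an element $\tilde F_y$ of the path algebra of $B$, which embeds in the path algebra of $A$ at no extra arithmetic cost.

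For step (iii), I would compute $\sum_{y\in\tilde Y}\tilde y\,\tilde F_y$ in a system of Gel'fand--Tsetlin bases for the path algebra of $A$; by the definition of $m_A(R,Y,B)$ this takes at most $\dim(A)\cdot m_A(R,Y,B)$ operations, and by Lemma~\ref{equivcomp2} the result is precisely the Fourier transform of $f$ on $R$. Summing the two estimates and dividing by $\dim(A)$ yields $t_A(R)\leq t_B(R_B)+m_A(R,Y,B)$. The main subtlety, flagged in the remark preceding the lemma, is step (i): unlike for coset representatives in a group, the map $(y,b)\mapsto yb$ need not be a bijection onto $\hat A$, so the rearrangement may over-count basis elements; since $\hat A\subseteq\{yb\}$, however, this only enlarges the sum being computed, so the estimate persists as an upper bound. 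The rest is a direct translation of the group-case argument of \cite{sovII} into the path-algebra language that Lemma~\ref{bratchain} makes available for general semisimple algebras.
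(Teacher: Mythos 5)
Your three-step outline (regroup along $Y$, transform each $F_y$ on $B$, recombine in the path algebra) is exactly the argument the paper intends; the paper itself supplies no proof but cites the group-algebra versions (Lemma 3.1 of \cite{sovII}, Lemma 2.10 of \cite{maslen}, Proposition 1 of \cite{diarock}), all of which take $Y$ to be an exact transversal of cosets. Steps (i) and (iii) are fine: distributing the inputs $f(a_i)$ among the $F_y$ costs no arithmetic, and Lemma \ref{equivcomp2} together with the definition of $m_A(R,Y,B)$ accounts for the recombination.

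The gap is in step (ii), and it sits precisely at the point where ``factor set'' differs from ``transversal.'' Your count $|Y|\cdot T_B(R_B)=\dim(A)\cdot t_B(R_B)$ rests on $|Y|\cdot\dim(B)=\dim(A)$, which you attribute to minimality of the factor set; but that is not a hypothesis of the lemma. The definition only guarantees $\hat A\subseteq\{yb\mid y\in Y,\,b\in\hat B\}$, which forces $|Y|\geq\dim(A)/\dim(B)$, with equality only when $(y,b)\mapsto yb$ is a bijection onto $\hat A$. The factor sets actually used later in the paper are not exact: for the Brauer algebra, $|Y|=n+\binom{n}{2}=n(n+1)/2$ while $\dim(\mathcal{B}r_n)/\dim(\mathcal{B}r_{n-1})=2n-1$, so $|Y|\dim(B)>\dim(A)$ for all $n\geq 3$. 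Running the generic $B$-algorithm once per $y\in Y$ therefore costs $|Y|\,T_B(R_B)$, and your argument yields only
$$t_A(R)\;\leq\;\frac{|Y|\dim(B)}{\dim(A)}\,t_B(R_B)+m_A(R,Y,B),$$
with a coefficient strictly greater than $1$ in the cases of interest. Your closing observation that over-counting ``only enlarges the sum being computed'' rescues the $m_A$ term (which is defined for arbitrary $F_y$) but not the $t_B$ term, since the enlarged problem has $|Y|\dim(B)$ inputs rather than $\dim(A)$. To close the gap you must either add the hypothesis that $Y$ is exact, or argue that the functions $F_y$ --- whose supports partition $\hat A$ and hence have total size $\dim(A)$ --- can be transformed collectively in $\dim(A)\,t_B(R_B)$ operations; this does not follow from the definition of $T_B(R_B)$ as a worst-case straight-line complexity for a single arbitrary input on $B$.
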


Lemma \ref{factorsum} is a restatement of Lemma 3.1 of \cite{sovII}, Lemma 2.10 of \cite{maslen} and Proposition 1 of \cite{diarock}.  It shows that  to compute the Fourier transform of a complex function defined on $A$ at a set of $B$-adapted representations, we need only compute   the pieces
$$\mathcal{F}_Y:=\sum_{y\in \tilde{Y}}yF_y.$$

In doing so, the complexity estimate ``reduces" to a close study of the computation of $\mathcal{F}_Y$. This idea can  be iterated through a chain of subalgebras.  Assuming a set of representations $R$ adapted to a chain $A=A_n> A_{n-1}>\cdots> A_0$ and factor sets $Y_i\subseteq A_i$, iteration  of Lemma \ref{factorsum} gives \begin{equation}\label{iterate}t_A(R)\leq t_{A_0}(R_{A_0})+\sum_{i=1}^n m_{A_i}(R_{A_i},Y_i, A_{i-1}).\end{equation}

Lemma \ref{equivcomp2} casts computation of a Fourier transform on $A$ in terms of computation in the path algebra. 
Lemma \ref{factorsum} shows how this can be accomplished  via factoring in the semisimple algebra. In particular, we rely on the special and sparse structure of elements that are in the intersection of sublalgebras and centrailizers of subalgebras, a class of elements of particular importance in the examples of interest for this paper. This follows the approach taken in \cite{sovII}  wherein further details and examples can be found. We outline the ideas below.


\begin{definition}\label{widef} Let $\mathcal{B}$ be a Bratteli diagram with highest grading at least $n$ corresponding to a subalgebra chain for $A$. For $X\subseteq (\mathbb{C}[\mathcal{B}_n])^m=\mathbb{C}[\mathcal{B}_n]\times\cdots\times\mathbb{C}[\mathcal{B}_n]$,
let $i^+$ denote the smallest integer such that $x_i\in \mathbb{C}[\mathcal{B}_{i^+}]$ for all $i$th entries $x_i$ of elements of $X$. Similarly, let $i^-$ denote the largest integer less than or equal to $i^+$ such that $x_i\in\cent(\mathbb{C}[\mathcal{B}_{i^-}])$ for all $i$th entries $x_i$ of elements of $X$. Then for $1\leq i\leq m$ define $$X_i:=\mathbb{C}[\mathcal{B}_{i^+}]\cap\cent(\mathbb{C}[\mathcal{B}_{i^-}]).$$ \end{definition}

To each space $X_i$, associate the quiver $Q_i$ of Figure \ref{Qi}. (Note that $Q_i$ is also the quiver associated to every element of $X_i$.) Let $\Hom(Q_i;\mathcal{B})$ denote the set of morphisms of $Q_i$ into the Bratteli diagram $\mathcal{B}$ and  $A(Q_i;\mathcal{B})$ denote the space of finitely supported formal $\mathbb{C}$-linear combinations of such morphisms. By Lemma 5.5 of \cite{sovII}, $X_i\cong A(Q_i;\mathcal{B})$. Thus, $\dim (X_i)=\#\Hom(Q_i;\mathcal{B})$, for $\#\Hom(Q_i;\mathcal{B})$ the number of morphisms from $Q_i$ into $\mathcal{B}$. 
 
\begin{figure}[H]\begin{center}
\begin{tikzpicture}[shorten >=1pt,node distance=2cm,on grid,auto,/tikz/initial text=] 
  \node at (-4.25,-1) (q) {$Q_i$};
   \node[pnt] at (-2,0) (02) {};
   \node[pnt] at (-6.5,0) (n2) {};
   \node[pnt] at (-3.5,0) (11) {};
   \node at (-3.25,-.2) (l) {\footnotesize$i^-$};
   \node[pnt] at (-5,0) (31) {};
   \node at (-5.3,-.2) (l) {\footnotesize$i^+$};
   \draw (02) node[below] {\footnotesize $0$};
   \draw (n2) node[below] {\footnotesize $n$};
   \path[->,every node/.style={font=\scriptsize}]
    (02) edge node {} (11)
    (31) edge node {} (n2)
    (11) edge [bend left=55] node {} (31)
    (11) edge [bend right=55] node {} (31);  

\end{tikzpicture}
\caption{The quiver associated to $X_i$ and $x_i$.}
\label{Qi}
\end{center}\end{figure}
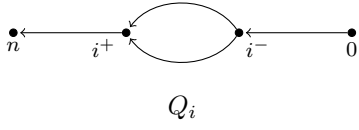

In this setting (bilinear) group algebra multiplication is transformed into a bilinear map on products of associated spaces of quiver morphisms $*:A(Q_1;B)\times A(Q_2;B)\rightarrow A(Q_1\triangle Q_2; B)$, for $Q_1\triangle Q_2$ the symmetric difference of $Q_1$ and $Q_2$, i.e., the induced graph on the edges of $Q_1\cup Q_2$ not in $Q_1\cap Q_2$. 
\begin{definition}\label{symdif} Let $R$ be a graded quiver with subquivers $Q_1, Q_2$. Let $E_1$ (respectively $E_2$) be the edge set of $Q_1$ (respectively $Q_2$). The \textbf{symmetric difference}, $Q_1\triangle Q_2$, of $Q_1$ and $Q_2$ is the induced graph on the edge set $(E_1\cup E_2)\setminus (E_1\cap E_2).$ 
\end{definition}

We now define separation of variables for the computation of a Fourier transform on an algebra: 

\begin{palgorithm}\label{alg2}
\begin{itemize}
\item[]
\item[I.] Choose $m\in\mathbb{N}$ and a subset $X\subseteq (\mathbb{C}[\mathcal{B}_n])^m$ such that $\vert X\vert=\vert Y\vert$ and for each $y\in \tilde{Y}$ there exists $(x_1,\dots, x_m)\in X$ with $yF_y=x_1\cdots x_m.$ Thus, $X$ can be thought of as a choice of factorization into   $m$ elements (some of which may be the identity) of each term $yF_y$. 
\item[II.] For $1\leq i\leq m$ let $X_i$ be as in Definition \ref{widef}. For $\sigma\in S_m$, let $w_i= x_{\sigma(i)}$. The   bilinear map $*$ 
is such that 
 $x_1\cdots x_m=(((w_1*w_2)*w_3)\cdots*w_m),$

\item[III.] For $  0\leq i<m$, let $W_i=\{(w_{i+1},\dots, w_m)\mid (x_1,\dots,x_m)\in X\}$. Let $
W_m=\emptyset. $ Note that $W_i\subseteq X_{\sigma(i+1)}\times\cdots\times X_{\sigma(m)}$.
\end{itemize}
\end{palgorithm}
The SOV approach gives a method for organizing computation $\sum_{y\in \tilde{Y}}yF_y$ in a manner that allows the complexity to be determined by counting the number of occurrences of subgraphs in the Bratteli diagram. Proof of the following theorem follows Theorem 3.8 from \cite{sovII}, essentially word for word, but in the more general setting of semisimple algebras. 
\begin{theorem}\label{efficiencyfirststate}
For $x_i$ and $\sigma$ as above, let $Q_i^\sigma$ denote the quiver associated to $w_i=x_{\sigma(i)}$. Then we may compute $\sum_{y\in \tilde{Y}} yF_y$ in at most $$\sum_{i=1}^{m-1}\vert W_{i-1}\vert\#\Hom((Q_1^\sigma\triangle \cdots \triangle Q_i^\sigma)\cup Q_{i+1}^\sigma;\mathcal{B})$$ multiplications and fewer additions. 
\end{theorem}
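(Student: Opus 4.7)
The plan is to follow, nearly verbatim, the argument given for Theorem~3.8 of \cite{sovII}, exploiting the fact that, by Lemma \ref{bratchain}, all the ingredients (factor sets, multiplications, quiver encodings) have exact analogues in the path-algebra setting. First I would rewrite $\sum_{y\in\tilde{Y}} yF_y$ as $\sum_{(x_1,\dots,x_m)\in X} x_1\cdots x_m$ using the factorization supplied by Step~I of the SOV approach, and then reassociate each summand as $(((w_1*w_2)*w_3)\cdots *w_m)$ according to the permutation $\sigma$ chosen in Step~II.

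Next I would organize the overall computation as an iterated accumulation. For each $0\le i<m$ and each tail $\bar{w}\in W_i$, define a partial sum $P_i[\bar{w}]$ consisting of the contributions of every $(x_1,\dots,x_m)\in X$ whose tail matches $\bar{w}$, accumulated after $i$ bilinear multiplications. A short computation shows that the $P_i$ satisfy a recursion that breaks the full computation into $m-1$ phases, the $i$th of which performs (up to reindexing) exactly $|W_{i-1}|$ bilinear products of an accumulated factor by the next $w$; the final answer is assembled from the $P_{m-1}$'s using only additions.

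It then remains to bound the cost of each of these bilinear products. By Lemma~5.5 of \cite{sovII}, the accumulated factor at the start of phase $i$ lies in $A(Q_1^\sigma \triangle\cdots\triangle Q_i^\sigma;\mathcal{B})$, while the incoming factor $w_{i+1}$ lies in $A(Q_{i+1}^\sigma;\mathcal{B})$. The key bilinear-multiplication estimate from \cite{sovII} says that multiplying an element of $A(Q;\mathcal{B})$ by an element of $A(Q';\mathcal{B})$ can be done using at most $\#\Hom(Q\cup Q';\mathcal{B})$ scalar multiplications (and strictly fewer additions), with the result living in $A(Q\triangle Q';\mathcal{B})$. Summing this cost, with multiplicity $|W_{i-1}|$, over phases $i=1,\dots,m-1$ yields precisely the claimed bound.

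The only real issue to check is that the quiver-morphism machinery of \cite{sovII}---in particular the identification $X_i\cong A(Q_i;\mathcal{B})$ and the bilinear complexity bound---survives intact when one passes from a chain of finite-group algebras to a chain of arbitrary semisimple algebras. Since path-algebra multiplication is defined purely in terms of pairs of paths in the Bratteli diagram, and Lemma~\ref{bratchain} guarantees that the chain of path algebras is isomorphic to the ambient chain of semisimple algebras, this transfer is formal and no new combinatorial input is required; this is why the authors are justified in saying that the argument of \cite{sovII} goes through essentially word for word.
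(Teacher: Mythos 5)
Your proposal matches the paper's treatment: the paper itself gives no independent argument but states that the proof "follows Theorem 3.8 from \cite{sovII}, essentially word for word, but in the more general setting of semisimple algebras," which is exactly the strategy you outline (and you usefully sketch the internal accumulation-over-tails structure and the $\#\Hom(Q\cup Q';\mathcal{B})$ bilinear cost bound that the cited argument relies on). Your closing observation—that the transfer to general semisimple algebras is formal because the quiver machinery only sees the Bratteli diagram via Lemma~\ref{bratchain}—is precisely the justification the authors invoke.
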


\section{The complexity of Fourier transforms on the Brauer and BMW algebras}\label{mainresults}

The SOV approach first factors the elements of a factor set, then translates path algebra multiplication into maps indexed by subgraphs. The complexity is determined by the size of the factorization sets and the number of occurrences of these subgraphs in the Bratteli diagram. In this section we apply these ideas to the Braer and BMW algebras to give complexity results for Fourier transforms on these algebras.

For a parameter $q$, the Brauer algebra is a semisimple $\mathbb{C}(q)$-algebra, while the BMW algebra is a ``deformation" of the Brauer algebra. For $q=1$, the group algebra of the symmetric group is a subalgebra of the Brauer algebra: $\mathbb{C}[S_n]< \mathcal{B}r_n$.  As such, these results are natural extensions of Fourier transforms of functions on the symmetric group, and in fact the proof of Theorem \ref{brauerthm} yields the same diagrams as in \cite{maslen}.

\subsection{Background: The Brauer Algebra}\label{Brauerback}
An element in the symmetric group $S_n$ is realized as a diagram on $2n$ points, consisting of two rows of $n$ points each, with each point in the top row connected by an edge to exactly one point in the bottom row (see Figure \ref{sym1}). For two elements $x,y$ in $S_n$, the product $xy$ is the concatenation of the two diagrams: to compute the product $xy$, place the diagram for $x$ on top of the one for $y$ and trace the edges from top to bottom (note that we consider multiplication from left to right).
\begin{figure}[H] 
\begin{center}
\begin{tikzpicture}
   \node[pnt] at (-3,0) (v_1) {};
   \node[pnt] (v_2) [right of=v_1] {};
   \node[pnt] (v_3) [right of=v_2] {};
   \node[pnt] (v_4) [right of=v_3] {};
   \node[pnt] (v_5) [below of=v_1] {};
   \node[pnt] (v_6) [below of=v_2] {};
   \node[pnt] (v_7) [below of=v_3] {};
   \node[pnt] (v_8) [below of=v_4] {};
   \draw (v_1) node[above] {\footnotesize $1$};
   \draw (v_2) node[above] {\footnotesize $2$};
   \draw (v_3) node[above] {\footnotesize $3$};
   \draw (v_4) node[above] {\footnotesize $4$};
   \draw (v_5) node[below] {\footnotesize $1$};
   \draw (v_6) node[below] {\footnotesize $2$};
   \draw (v_7) node[below] {\footnotesize $3$};
   \draw (v_8) node[below] {\footnotesize $4$};

   \path[-]
    (v_1) edge node {} (v_7)
    (v_2) edge  node {} (v_8)
    (v_3) edge  node {} (v_6)
    (v_4) edge node {} (v_5);
      \end{tikzpicture}
\caption{(1324)}
\label{sym1}
\end{center}
\end{figure}

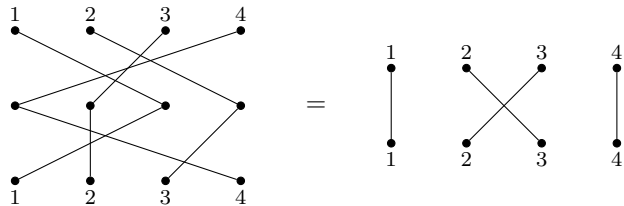
\begin{figure}[H] 
\begin{center}
\begin{tikzpicture}
   \node[pnt] at (-3,0) (v_1) {};
   \node[pnt] (v_2) [right of=v_1] {};
   \node[pnt] (v_3) [right of=v_2] {};
   \node[pnt] (v_4) [right of=v_3] {};
   \node[pnt] (v_5) [below of=v_1] {};
   \node[pnt] (v_6) [below of=v_2] {};
   \node[pnt] (v_7) [below of=v_3] {};
   \node[pnt] (v_8) [below of=v_4] {};
   \node[pnt] (w_5) [below of=v_5] {};
   \node[pnt] (w_6) [below of=v_6] {};
   \node[pnt] (w_7) [below of=v_7] {};
   \node[pnt] (w_8) [below of=v_8] {};
   \node (q) [right of=v_8] {=};
   \node[pnt] at (2,-.5) (u_1) {};
   \node[pnt] (u_2) [right of=u_1] {};
   \node[pnt] (u_3) [right of=u_2] {};
   \node[pnt] (u_4) [right of=u_3] {};
   \node[pnt] (u_5) [below of=u_1] {};
   \node[pnt] (u_6) [below of=u_2] {};
   \node[pnt] (u_7) [below of=u_3] {};
   \node[pnt] (u_8) [below of=u_4] {};

   \draw (v_1) node[above] {\footnotesize $1$};
   \draw (v_2) node[above] {\footnotesize $2$};
   \draw (v_3) node[above] {\footnotesize $3$};
   \draw (v_4) node[above] {\footnotesize $4$};
   \draw (w_5) node[below] {\footnotesize $1$};
   \draw (w_6) node[below] {\footnotesize $2$};
   \draw (w_7) node[below] {\footnotesize $3$};
   \draw (w_8) node[below] {\footnotesize $4$};

   \draw (u_1) node[above] {\footnotesize $1$};
   \draw (u_2) node[above] {\footnotesize $2$};
   \draw (u_3) node[above] {\footnotesize $3$};
   \draw (u_4) node[above] {\footnotesize $4$};
   \draw (u_5) node[below] {\footnotesize $1$};
   \draw (u_6) node[below] {\footnotesize $2$};
   \draw (u_7) node[below] {\footnotesize $3$};
   \draw (u_8) node[below] {\footnotesize $4$};

     \path[-]
    (v_1) edge node {} (v_7)
    (v_2) edge  node {} (v_8)
    (v_3) edge  node {} (v_6)
    (v_4) edge node {} (v_5)
    (v_5) edge node {} (w_8)
    (v_6) edge  node {} (w_6)
    (v_7) edge  node {} (w_5)
    (v_8) edge node {} (w_7)   
    (u_1) edge node {} (u_5)
    (u_2) edge  node {} (u_7)
    (u_3) edge  node {} (u_6)
    (u_4) edge node {} (u_8);    

\end{tikzpicture}
\caption{(1324)*(143)=(23)}\label{concat}
\end{center}
\end{figure}

The simple transpositions $\{r_i=(i\;i+1)\mid 1\leq i\leq n-1\}$ form a generating set for the symmetric group.

Elements of the Brauer monoid, $Br_n$, are realized by generalizing symmetric group diagrams: consider diagrams on $2$ rows of $n$ points each, with edges connecting pairs of points regardless of row and each point part of exactly one edge. Multiplication is again concatenation of diagrams. Note that in some cases, this introduces a closed loop. A parameter $q$ is used to keep track of the number of closed loops: for two diagrams $x,y\in Br_n$, let $c$ denote the number of closed loops in the multiplication $xy$ and let $z$ be the diagram of this product with the closed loops removed. Then $xy=q^c z$.  

\begin{figure}[H] 
\begin{center}
\begin{tikzpicture}
   \node[pnt] at (-3,0) (v_1) {};
   \node[pnt] (v_2) [right of=v_1] {};
   \node[pnt] (v_3) [right of=v_2] {};
   \node[pnt] (v_4) [right of=v_3] {};
   \node[pnt] (v_5) [below of=v_1] {};
   \node[pnt] (v_6) [below of=v_2] {};
   \node[pnt] (v_7) [below of=v_3] {};
   \node[pnt] (v_8) [below of=v_4] {};
   \node[pnt] (w_5) [below of=v_5] {};
   \node[pnt] (w_6) [below of=v_6] {};
   \node[pnt] (w_7) [below of=v_7] {};
   \node[pnt] (w_8) [below of=v_8] {};
   \node (q) [right of=v_8] {=};
   \node (x) at (-3.5,-.5) {$x$};
   \node (x) at (-3.5,-1.5) {$y$};
   \node (q) at (1.5,-1) {$q$};
   \node[pnt] at (2,-.5) (u_1) {};
   \node[pnt] (u_2) [right of=u_1] {};
   \node[pnt] (u_3) [right of=u_2] {};
   \node[pnt] (u_4) [right of=u_3] {};
   \node[pnt] (u_5) [below of=u_1] {};
   \node[pnt] (u_6) [below of=u_2] {};
   \node[pnt] (u_7) [below of=u_3] {};
   \node[pnt] (u_8) [below of=u_4] {};
   \node (x) at (5.5,-1) {$z$};
     \path[-]
    (v_1) edge [bend right=25] node {} (v_4)
    (v_2) edge  node {} (v_8)
    (v_3) edge  node {} (v_6)
    (v_7) edge [bend right=25] node {} (v_5)
    (v_7) edge [bend left=25] node {} (v_5)
    (v_6) edge node {} (w_8)
    (w_6) edge [bend right=25]   node {} (w_5)
    (v_8) edge node {} (w_7)   
    (u_1) edge [bend right=25]  node {} (u_4)
    (u_2) edge  node {} (u_7)
    (u_3) edge  node {} (u_8)
    (u_5) edge [bend left=25]  node {} (u_6);    

\end{tikzpicture}
\caption{ $xy=q^1z$}\label{brauer}
\end{center}
\end{figure}
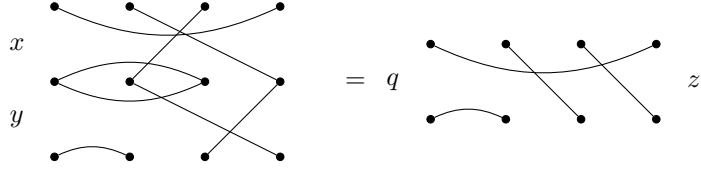

Two Brauer diagrams $d_1$ and $d_2$ are \textit{equivalent} if they differ only in the number of closed loops, i.e., if when $q=1$, $d_1=d_2$. For example, for $x,y,z$ as in Figure \ref{brauer}, the product $xy$ is equivalent to $z$. The Brauer monoid, $Br_n$ consists of the set of equivalence classes of such diagrams and is generated by the set of elements $\{r_i, e_i \mid 1\leq i\leq n-1\}$ (see Figure \ref{figg}). Note that the symmetric group $S_n$ is generated by the transpositions $\{r_i\mid 1\leq i \leq n-1\}$ and so $S_n\subseteq Br_n$. 

\begin{figure}[H] 
\begin{center}
\begin{tikzpicture}
   \node[pnt] at (-3,0) (v_1) {};
   \node[pnt] (v_2) [right of=v_1] {};
   \node[pnt] (v_3) [right of=v_2] {};
   \node[pnt] (v_4) [right of=v_3] {};
   \node[pnt] (v_5) [right of=v_4] {};
   \node[pnt] (v_6) [right of=v_5] {};
   \node at (-2.5,-.5) {$\dots$};
   \node at (1.5,-.5) {$\dots$};
   \node at (-1,.5) {\small$i$};
   \node at (0,.5) {\small$i+1$};
   \node at (-.5,-2) {\large${r_i}$};
   \node[pnt] (v_7) [below of=v_1] {};
   \node[pnt] (v_8) [below of=v_2] {};
   \node[pnt] (v_9) [below of=v_3] {};
   \node[pnt] (v_10) [below of=v_4] {};
   \node[pnt] (v_11) [below of=v_5] {};
   \node[pnt] (v_12) [below of=v_6] {};
   \node[pnt] at (4,0) (w_1) {};
   \node[pnt] (w_2) [right of=w_1] {};
   \node[pnt] (w_3) [right of=w_2] {};
   \node[pnt] (w_4) [right of=w_3] {};
   \node at (4.5,-.5) {$\dots$};
   \node at (8.5,-.5) {$\dots$};
   \node at (6,.5) {\small$i$};
   \node at (7,.5) {\small$i+1$};
   \node at (6.5,-2) {\large${e_i}$};
   \node[pnt] (w_5) [right of=w_4] {};
   \node[pnt] (w_6) [right of=w_5] {};
   \node[pnt] (w_7) [below of=w_1] {};
   \node[pnt] (w_8) [below of=w_2] {};
   \node[pnt] (w_9) [below of=w_3] {};
   \node[pnt] (w_10) [below of=w_4] {};
   \node[pnt] (w_11) [below of=w_5] {};
   \node[pnt] (w_12) [below of=w_6] {};
   \path[-]
    (v_1) edge node {} (v_7)
    (v_2) edge  node {} (v_8)
    (v_3) edge  node {} (v_10)
    (v_5) edge  node {} (v_11)
    (v_6) edge node {} (v_12)
    (v_4) edge node {} (v_9)
    (w_1) edge node {} (w_7)
    (w_2) edge  node {} (w_8)
    (w_3) edge [bend right=50] node {} (w_4)
    (w_10) edge [bend right=50]  node {} (w_9)
    (w_5) edge  node {} (w_11)
    (w_6) edge node {} (w_12);
\end{tikzpicture}
\caption{$r_i, e_i\in Br_n$}\label{figg}
\end{center}
\end{figure}
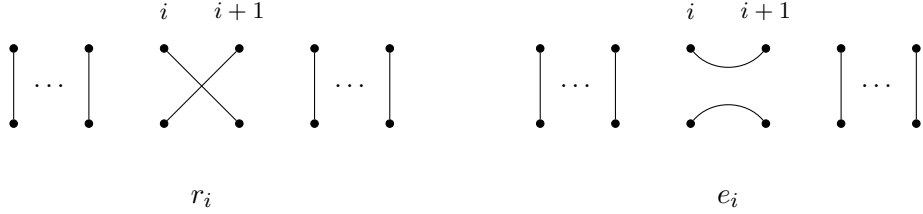

The Brauer algebra, $\mathcal{B}r_n$, is the $\mathbb{C}(q)$-algebra with basis $Br_n$ and dimension $(2n-1)!!$.  Equivalently (see, e.g., \cite{BenkhartShaderRam}), $\mathcal{B}r_n$ has algebraic presentation given by generating set $$\{r_i, e_i \mid 1\leq i\leq n-1\},$$ and relations:

$$\begin{array}{llll}
(1)& r_i^2=1, &(2)& r_ir_j=r_jr_i, \;\;\;\;r_ie_j=e_jr_i,\;\;\;\; e_ie_j=e_je_i,\;\; |i-j|>1\\
 (3)& e_i^2=qe_i, &(4)& e_ir_i=r_ie_i=e_i, \\
 (5)&r_ir_{i+1}r_i=r_{i+1}r_ir_{i+1},& (6)&e_ie_{i+1}e_i=e_i,\;\;\;\; e_{i+1}e_ie_{i+1}=e_{i+1},\\
(7)& r_ie_{i+1}e_i=r_{i+1}e_i, & (8)& e_{i+1}e_ir_{i+1}=e_{i+1}r_i.\\
 \end{array}$$
 
In \cite{wenzl}, Wenzl showed that the Brauer algebra, $\mathcal{B}r_n(q)$, is a semisimple algebra over $\mathbb{C}(q)$. In fact, replacing $q$ by $\alpha\in\mathbb{C}$, $\mathcal{B}r_n(\alpha)$ is semisimple for all but finitely many integers $\alpha$ \cite{rui}.

\subsection{Fourier transforms on $\mathcal{B}r_n$}

 We first find a factor set for $\mathcal{B}r_n$ over $\mathcal{B}r_{n-1}$, viewing each diagram in $Br_{n-1}$ as an element of $Br_n$ by adding a point to the end of the top and bottom rows and connecting these two points with an edge. With Lemma \ref{factorsum}, we then use the SOV approach to compute the Fourier transform of $f=\sum_{d\in Br_n} f(d)d$ in $\mathcal{B}r_n$.

Let $R=\{id, r_1\cdots r_{n-1}, r_2\cdots r_{n-1}, \dots, r_{n-1}\}$ and let $ER=\{r_{j}\cdots r_{i-1}e_{i}\cdots e_{n-1}\mid 1\leq i\leq n-1, 1\leq j\leq i-1\}$
Let $Y=R\cup ER$
\begin{lemma}\label{facset}
$Y$ is a factor set for $\mathcal{B}r_n$ over $\mathcal{B}r_{n-1}$.
\end{lemma}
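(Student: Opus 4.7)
The plan is to prove the lemma by directly exhibiting, for each Brauer diagram $d\in Br_n$, a pair $(y,b)\in Y\times \mathcal{B}r_{n-1}$ with $d=yb$, where $\mathcal{B}r_{n-1}$ is embedded into $\mathcal{B}r_n$ by adjoining a through-strand at position $n$. I would organize the argument by case analysis on the partner of the bottom-$n$ vertex in $d$: either (i) bottom-$n$ is paired with top-$n$ (a through-strand at $n$), (ii) bottom-$n$ is paired with some top-$\ell$ for $\ell<n$, or (iii) bottom-$n$ is paired with some bottom-$\ell$ for $\ell<n$ (a cup at the bottom).

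In case (i), $d$ itself already lives in $\mathcal{B}r_{n-1}$, and $y=\mathrm{id}$, $b=d$ gives the factorization. In case (ii), I would choose $y=r_\ell r_{\ell+1}\cdots r_{n-1}\in R$; as a Brauer diagram this permutation sends top-$\ell$ to bottom-$n$ (verified by tracing the composition of simple transpositions left-to-right). Since $y\in S_n\subset\mathcal{B}r_n$ is invertible, I would set $b=y^{-1}d=r_{n-1}r_{n-2}\cdots r_\ell\cdot d$ and check by strand-tracing that top-$n$ and bottom-$n$ of $b$ are paired, so that $b\in\mathcal{B}r_{n-1}$. In case (iii), I would take $y=r_jr_{j+1}\cdots r_{i-1}e_ie_{i+1}\cdots e_{n-1}\in ER$ for indices $(i,j)$ determined by the cap at the top of $d$ that involves top-$n$ (or, when top-$n$ is itself a through-strand, by any cap at the top of $d$, whose existence is guaranteed because the number of caps at top equals the number of cups at bottom). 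Diagrammatically, $y$ carries cap $(j,i+1)$ at its top, cup $(n-1,n)$ at its bottom, a through-strand from top-$n$ to bottom-$(n-2)$, and through-strands elsewhere; the diagram $b\in\mathcal{B}r_{n-1}$ is then constructed directly so that its matching on the remaining positions transports, via the strand structure of $y$, to the matching of $d$ on the positions not involving the cup and cap of $y$.

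The main obstacle lies in case (iii): because $e_i^2=qe_i$ rather than $\mathrm{id}$, the elements of $ER$ are not invertible in $\mathcal{B}r_n$, so $b$ cannot be extracted as $y^{-1}d$ but must be built by hand from $d$. Two nontrivial checks then arise. First, one must confirm that the constructed $b$ really lies in $\mathcal{B}r_{n-1}$, i.e., that the matching data transported from $d$ across the interface gives a well-defined Brauer diagram with through-strand at $n$. Second, one must verify that the concatenation $yb$ introduces no closed loops at the interface; if one did, then $yb=q^c d$ for some $c>0$ and the equality of basis elements would fail. The loop-free verification amounts to checking that the cup of $y$ at positions $(n-1,n)$ does not close into a loop against the strands of $b$ meeting those interface positions, which the construction of $b$ from the matching of $d$ precludes.
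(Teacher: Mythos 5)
Your argument is correct, but it is organized differently from the paper's. The paper splits on whether $d\in S_n$: for permutations it simply cites that $R$ is a transversal of $S_n/S_{n-1}$, and for every $d\in Br_n\setminus S_n$ it picks an arbitrary horizontal edge in the top row of $d$, takes the unique $y\in ER$ whose top cap is that edge, and reads the element $d'\in Br_{n-1}$ with $d=yd'$ off the diagram. You instead split on the partner of the bottom-$n$ point, so that non-permutation diagrams whose bottom-$n$ lies on a through-strand are factored with a permutation $y=r_\ell\cdots r_{n-1}\in R$ and $b=y^{-1}d$ (legitimate: multiplying by a permutation creates no closed loops, and your strand-trace shows $b$ has a through-strand at $n$), while $ER$ is reserved for diagrams whose bottom-$n$ sits in a bottom cup. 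Both routes work; the paper's split buys brevity (the $S_n$ case is quoted, and one statement covers all non-permutations), while yours makes the invertibility of the $R$-factors do the work explicitly and gives a concrete recipe for $b$ in that case. Three small remarks on your case (iii): the preference for the top cap containing top-$n$ is unnecessary --- any top cap of $d$ works, which is exactly the paper's choice; your description of $y$ as having a through-strand from top-$n$ to bottom-$(n-2)$ fails when the chosen cap is $\{j,n\}$ (i.e., $i+1=n$), though nothing in the argument depends on it; and loop-freeness needs no appeal to the particular construction of $b$, since the cup $(n-1,n)$ of $y$ meets, at interface position $n$, the through-strand of any $b\in\mathcal{B}r_{n-1}$, which exits at the external bottom-$n$ point, so no closed loop can ever form.
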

\begin{proof}
First note that $R\subseteq S_n$ forms a complete set of coset representatives for $S_n/S_{n-1}$ \cite{maslen} and so we need only show that for any $d\in Br_n-S_n$, $d=yd'$, for $y\in Y$, $d'\in Br_{n-1}$. 

Due to the final factor $e_{n-1}$, each element of $ER$ has exactly one horizontal edge in its bottom row, connecting the last two points. Each element of $ER$ also has exactly one horizontal edge in its top row, and each possible such edge corresponds to an element of  $ER$. As an example, see Figure \ref{drawafigure}.

Let $d\in Br_n-S_n$. Then $d$ has at least one horizontal edge, $e$, in its top row. Choose an element, $y$, of $ER$ with edge $e$. This determines an element $d'$ in $Br_{n-1}$ with $d=yd'$. For an example, see Figure \ref{drawanotherfigure}. Note that for this example there are two possible choices for $y$, and (though not always the case) $d'$ is the same for each choice of $y$.

\begin{figure}[H]\begin{center}
\begin{tikzpicture}
\node at (-1,0) {};
\node at (-1,-.5) {};
\node[pnt] at (.25,0) (A) {};
\node[pnt] at (.5,0) (B) {};
\node[pnt] at (.75,0) (C1){};
\node[pnt] at (1.0,0) (D1){};
\node[pnt] at (.25,-.5) (A2) {};
\node[pnt] at (.5,-.5) (B2) {};
\node[pnt] at (.75,-.5) (C2) {};
\node[pnt] at (1.0,-.5) (D2){};

\node at (.575,-1) (q) {$e_1e_2e_3$};

\node[pnt] at (1.75,0) (A3) {};
\node[pnt] at (2,0) (B3) {};
\node[pnt] at (2.25,0) (C3){};
\node[pnt] at (1.75,-.5) (A4) {};
\node[pnt] at (2,-.5) (B4) {};
\node[pnt] at (2.25,-.5) (C4) {};
\node[pnt] at (2.5,0) (D3) {};
\node[pnt] at (2.5,-.5) (D4) {};

\node at (2.125,-1) (q) {$r_1e_2e_3$};

\node[pnt] at (3.25,0) (A5) {};
\node[pnt] at (3.5,0) (B5) {};
\node[pnt] at (3.75,0) (C5){};
\node[pnt] at (4,0) (D5){};
\node[pnt] at (3.25,-.5) (A6) {};
\node[pnt] at (3.5,-.5) (B6) {};
\node[pnt] at (3.75,-.5) (C6) {};
\node[pnt] at (4,-.5) (D6){};

\node at (3.575,-1) (q) {$r_1r_2e_3$};

\node[pnt] at (4.75,0) (A7) {};
\node[pnt] at (5,0) (B7) {};
\node[pnt] at (5.25,0) (C7){};
\node[pnt] at (5.5,0) (D7){};
\node[pnt] at (4.75,-.5) (A8) {};
\node[pnt] at (5.0,-.5) (B8) {};
\node[pnt] at (5.25,-.5) (C8) {};
\node[pnt] at (5.5,-.5) (D8){};

\node at (5.125,-1) (q) {$e_2e_3$};

\node[pnt] at (6.25,0) (A9) {};
\node[pnt] at (6.5,0) (B9) {};
\node[pnt] at (6.75,0) (C9){};
\node[pnt] at (7,0) (D9){};
\node[pnt] at (6.25,-.5) (A10) {};
\node[pnt] at (6.5,-.5) (B10) {};
\node[pnt] at (6.75,-.5) (C10) {};
\node[pnt] at (7,-.5) (D10){};

\node at (6.575,-1) (q) {$r_2e_3$};

\node[pnt] at (7.75,0) (A11) {};
\node[pnt] at (8,0) (B11) {};
\node[pnt] at (8.25,0) (C11){};
\node[pnt] at (8.5,0) (D11){};
\node[pnt] at (7.75,-.5) (A12) {};
\node[pnt] at (8.0,-.5) (B12) {};
\node[pnt] at (8.25,-.5) (C12) {};
\node[pnt] at (8.5,-.5) (D12){};

\node at (8.125,-1) (q) {$e_3$};

        \path[-]
   (A) edge[bend right=55] node {} (B)
   (C1) edge node {} (A2)
   (D1) edge node {} (B2)
   (C2) edge[bend left=55] node {} (D2)
   
   (A3) edge[bend right=55] node {} (C3)
   (B3) edge node {} (A4)
   (D3) edge node {} (B4)
   (C4) edge[bend left=55] node {} (D4)

   (A5) edge[bend right=55] node {} (D5)
   (B5) edge node {} (A6)
   (C5) edge node {} (B6)
   (C6) edge[bend left=55] node {} (D6)

   (B7) edge[bend right=55] node {} (C7)
   (A7) edge node {} (A8)
   (D7) edge node {} (B8)
   (C8) edge[bend left=55] node {} (D8)

   (B9) edge[bend right=55] node {} (D9)
   (A9) edge node {} (A10)
   (C9) edge node {} (B10)
   (C10) edge[bend left=55] node {} (D10)

   (C11) edge[bend right=55] node {} (D11)
   (A11) edge node {} (A12)
   (B11) edge node {} (B12)
   (C12) edge[bend left=55] node {} (D12);
   
\end{tikzpicture}
\caption{$ER$ in $Br_4$}\label{drawafigure}
\end{center}\end{figure}
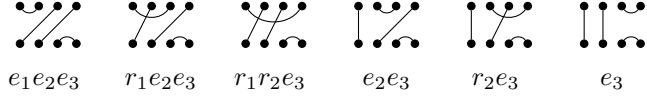
\begin{figure}[H]\begin{center}
\begin{tikzpicture}
\node at (-1,0) {};
\node at (-1,-.5) {};
\node[pnt] at (.25,0) (A) {};
\node[pnt] at (.5,0) (B) {};
\node[pnt] at (.75,0) (C1){};
\node[pnt] at (1.0,0) (D1){};
\node[pnt] at (.25,-.5) (A2) {};
\node[pnt] at (.5,-.5) (B2) {};
\node[pnt] at (.75,-.5) (C2) {};
\node[pnt] at (1.0,-.5) (D2){};

\node at (1.625,-.250) {=};
\node at (-1,-.5) {};
\node[pnt] at (2.25,-0.75) (A5) {};
\node[pnt] at (2.5,-0.75) (B5) {};
\node[pnt] at (2.75,-0.75) (C5){};
\node[pnt] at (3.0,-0.75) (D5){};
\node[pnt] at (2.25,-.25) (A4) {};
\node[pnt] at (2.5,-.25) (B4) {};
\node[pnt] at (2.75,-.25) (C4) {};
\node[pnt] at (3.0,-.25) (D4){};
\node[pnt] at (2.25,.25) (A3) {};
\node[pnt] at (2.5,.25) (B3) {};
\node[pnt] at (2.75,.25) (C3) {};
\node[pnt] at (3.0,.25) (D3){};

\begin{scope}[shift={(2,0)}]
\node at (1.625,-.250) {=};
\node at (-1,-.5) {};
\node[pnt] at (2.25,-0.75) (nA5) {};
\node[pnt] at (2.5,-0.75) (nB5) {};
\node[pnt] at (2.75,-0.75) (nC5){};
\node[pnt] at (3.0,-0.75) (nD5){};
\node[pnt] at (2.25,-.25) (nA4) {};
\node[pnt] at (2.5,-.25) (nB4) {};
\node[pnt] at (2.75,-.25) (nC4) {};
\node[pnt] at (3.0,-.25) (nD4){};
\node[pnt] at (2.25,.25) (nA3) {};
\node[pnt] at (2.5,.25) (nB3) {};
\node[pnt] at (2.75,.25) (nC3) {};
\node[pnt] at (3.0,.25) (nD3){};
\end{scope}

      \path[-]
   (A) edge[bend right=55] node {} (C1)
   (B) edge[bend right=55] node {} (D1)
   (A2) edge[bend left=55] node {} (D2)
   (B2) edge[bend left=55] node {} (C2)
   (C4) edge[bend left=55] node {} (D4)
      
   (A3) edge[bend right=55] node {} (C3)
   (B3) edge node {} (A4)
   (D3) edge node {} (B4)
   (A4) edge[bend right=55] node {} (B4)
   (C4) edge node {} (A5)
   (D4) edge node {} (D5)
   (B5) edge[bend left=55] node {} (C5)

   (nB3) edge[bend right=55] node {} (nD3)
   (nC3) edge node {} (nB4)
   (nA3) edge node {} (nA4)
   (nA4) edge[bend right=55] node {} (nB4)
   (nC4) edge[bend left=55] node {} (nD4)
   (nC4) edge node {} (nA5)
   (nD4) edge node {} (nD5)
   (nB5) edge[bend left=55] node {} (nC5);

\end{tikzpicture}
\caption{$d=r_1e_2e_3d'=r_2e_3d'$}\label{drawanotherfigure}
\end{center}\end{figure}
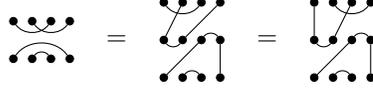
\end{proof}
\begin{theorem}[cf. Theorem \ref{brauerthm}]
The Fourier transform of an element $f$ in the Brauer algebra $\mathcal{B}r_n$ may be computed at a complete set $R$ of irreducible matrix representations of $\mathcal{B}r_n$ adapted to the chain of algebras
$$\mathcal{B}r_n> \mathcal{B}r_{n-1}>\cdots> \mathcal{B}r_0=\mathbb{C}(q)$$
in at most $(4n^2-n+4)\dim(\mathcal{B}r_n)$
operations. 
\end{theorem}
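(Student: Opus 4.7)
The plan is to apply the iterated SOV bound (Equation \ref{iterate}) to the subalgebra chain $\mathcal{B}r_n > \mathcal{B}r_{n-1} > \cdots > \mathcal{B}r_0$, using at each stage $k$ the factor set $Y_k = R_k \cup ER_k$ for $\mathcal{B}r_k$ over $\mathcal{B}r_{k-1}$ provided by Lemma \ref{facset}. It then suffices to bound $m_{\mathcal{B}r_k}(R_{\mathcal{B}r_k}, Y_k, \mathcal{B}r_{k-1})$ for each $k$ and sum over $k$.

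For each level $k$, I would apply Theorem \ref{efficiencyfirststate} to compute $\sum_{y \in \tilde{Y}_k} y F_y$ by writing every $y \in Y_k$ as an ordered product of generators ($y = r_j \cdots r_{k-1}$ in $R_k$, and $y = r_j \cdots r_{i-1} e_i \cdots e_{k-1}$ in $ER_k$). By relation (2) of the Brauer presentation, each individual factor $r_\ell$ or $e_\ell$ commutes with every $r_m, e_m$ such that $|m-\ell|>1$, so it lies in $\mathcal{B}r_{\ell+1} \cap \operatorname{Centralizer}(\mathcal{B}r_{\ell-1})$. In the language of Definition \ref{widef}, this assigns to each factor the standard hook quiver $Q_\ell$ of Figure \ref{Qi} with $\ell^- = \ell-1$ and $\ell^+ = \ell+1$, and a final factor $F_y$ lying in $\mathcal{B}r_{k-1}$. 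Ordering the SOV factors by $\sigma$ so that partial products are assembled in order of increasing grading, the symmetric-difference quivers $Q_1^\sigma \triangle \cdots \triangle Q_i^\sigma$ telescope into predictable band shapes within the Bratteli diagram.

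The main obstacle is the combinatorial counting of $\#\operatorname{Hom}(Q;\mathcal{B})$ into the Brauer Bratteli diagram. The Brauer branching rule allows both adding and removing a box, giving more edges than the Young lattice used in the $S_n$ analysis of \cite{maslen}; however, the Brauer diagram is multiplicity-free (per the remark following Figure \ref{BrnBratt}), so the counting argument of \cite{maslen} adapts. Via Lemma \ref{bratchain}, morphism counts for the telescoped band quivers translate into sums of $(\dim\rho)^2$ over appropriate vertex strata, each controlled by $\dim(\mathcal{B}r_k)$ or a ratio thereof. The author's remark that the proof ``yields the same diagrams as in \cite{maslen}'' strongly suggests following Maslen's per-level bookkeeping essentially verbatim, with $\dim(\mathcal{B}r_k)$ playing the role formerly played by $k!$.

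Finally I would assemble the bound by summing contributions over $k = 1, \ldots, n$. The polynomial coefficient $4n^2-n+4$ is tracked by (i) the quadratic size $|ER_k| \sim \binom{k}{2}$ of the Brauer-specific factor set, which dominates and supplies the $4n^2$ term; (ii) the linear size $|R_k| = k$ of the symmetric-group factor set, contributing the $-n$ correction after combining with the per-factor costs; and (iii) constant-order overhead from trivial, boundary, and final-level steps giving the $+4$. This yields $C(\mathcal{B}r_n) \leq (4n^2-n+4)\dim(\mathcal{B}r_n)$, as claimed.
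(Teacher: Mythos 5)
Your overall architecture matches the paper's: the same factor set $Y=R\cup ER$ from Lemma \ref{facset}, the same one-factor-per-level factorization through $Y_i=\{id,r_i,e_i\}$ with the hook quivers of Figure \ref{Qi}, the same appeal to Theorem \ref{efficiencyfirststate}, and iteration along the chain via Lemma \ref{factorsum}. The genuine gap is at the quantitative heart: the count $\#\Hom(\mathcal{H}_i^n\uparrow\mathcal{Q};\mathcal{B})$ for the \emph{Brauer} Bratteli diagram. You assert that, since the diagram is multiplicity-free, Maslen's $S_n$ counting adapts ``essentially verbatim'' with $\dim(\mathcal{B}r_k)$ in place of $k!$. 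It does not: the Brauer branching rule (add \emph{or} remove a box, vertices indexed by partitions of $i-2k$) breaks two steps of that argument, and the paper needs new estimates (Appendix \ref{Brauerapp}, Lemma \ref{BrnH1}, Corollary \ref{BrnH3}) to repair them. First, distinct $\alpha_{i-1}\neq\beta_{i-1}$ no longer determine $\alpha_i$ uniquely but only up to two choices, introducing an extra factor of $2$ in the off-diagonal sum; second, when $\alpha_{i-1}=\beta_{i-1}$ the number of admissible neighbors is $2\jmp(\beta_{i-1})+1$ rather than $\jmp(\beta_{i-1})$, giving a $(2\jmp(\beta_{i-1})+1)^2$ term. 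Combined with $\jmp(\beta_i)^2\le 2i$, these yield the specific bound $\#\Hom(\mathcal{H}_i^n\uparrow\mathcal{Q};\mathcal{B})\le\frac{16i-17}{2n-1}\dim(\mathcal{B}r_n)$, without which the stated constant cannot be reached. Your proposal defers exactly this step, which is where the content of the theorem lies.

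Moreover, your final accounting of the coefficient is not how the bound arises and would not produce it. The terms $4n^2$, $-n$, $+4$ do not come from $\vert ER_k\vert\sim\binom{k}{2}$ and $\vert R_k\vert$: the point of the SOV factorization is that the cost is never proportional to $\vert Y_k\vert$ (a per-element charge of that size, summed over the chain, would give a cubic coefficient, and even at the top level alone $\binom{n}{2}$ does not account for $4n^2$). In the paper the coefficient comes from the recursion $t_{\mathcal{B}r_k}(R)\le t_{\mathcal{B}r_{k-1}}(R_{\mathcal{B}r_{k-1}})+2\sum_{i=2}^{k}\frac{16i-17}{2k-1}$, i.e., a per-level cost that is \emph{linear} in $k$ (slope traced back to the jump bounds and the factor-of-$2$ corrections above), which telescopes to $2\sum_{i=2}^{n}\frac{(i-1)(8i-1)}{2i-1}\le 4n^2-n+3$, plus $t_{\mathcal{B}r_1}=1$. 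So the outline is right, but the Brauer-specific path counting must be carried out, and the mechanism you propose for assembling $(4n^2-n+4)\dim(\mathcal{B}r_n)$ is incorrect.
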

\begin{proof}
By Lemma \ref{facset}, $Y$ is a factor set for $\mathcal{B}r_{n}$ over $\mathcal{B}r_{n-1}$. For  $Y_i=\{id,r_i,e_i\}$, $Y\subseteq\{y_1y_2\cdots y_{n-1}\mid y_i\in Y_i\}$, giving a factorization of $Y$ as required by Step I of the SOV approach.  

Let $\mathcal{B}$ be the Bratteli diagram associated to the chain 
$\mathcal{B}r_n> \mathcal{B}r_{n-1}>\cdots> \mathcal{B}r_0,$
let $\{\mathbb{C}[\mathcal{B}_i]\}$ be the associated chain of path algebras, and let $\tilde{Y}_i=\{\tilde{y}_i\mid y_i\in Y_i\}$. Note that $\tilde{Y}_i\subseteq \mathbb{C}[\mathcal{B}_{i+1}]\cap\cent( \mathbb{C}[\mathcal{B}_{i-1}])$. By Lemma \ref{factorsum}, the complexity of the computation of a Fourier transform of $f$ on $\mathcal{B}r_n$ is bounded by the complexity of computation of:
$$\sum_{\substack{y_i\in \tilde{Y}_i}} y_1\cdots y_{n-1}F_{y_1\cdots y_{n-1}}$$ 
for $F_{y_1\cdots y_{n-1}}\in \mathbb{C}[\mathcal{B}_{n-1}]$. 

We now use the SOV approach.
\begin{itemize}
    \item[I.] Let $X=\{y_1,y_2,\dots,y_{n-1},F_{y_1\cdots y_{n-1}}\mid y_i\in\tilde{Y}_i\}$. 
    \item[II.] Note that $i^+=i+1$ and $i^-=i-1$ for $1\leq i<n$ and that $n^+=n-1$, $n^-=0$. Figure \ref{BnQ} shows the various component subquivers corresponding to the factors $y_i$. They combine together as per Figure \ref{BncalQ} to give the factorization $y_1\cdots y_{n-1}F_{y_1\cdots y_{n-1}}$. Thus, the algorithm proceeds by gluing together quivers $Q_i$ of Figure \ref{BnQ} (corresponding to $\tilde{Y}_i, F_{y_1\cdots y_{n-1}}$) to build the quiver $\mathcal{Q}$ of Figure \ref{BncalQ}. 

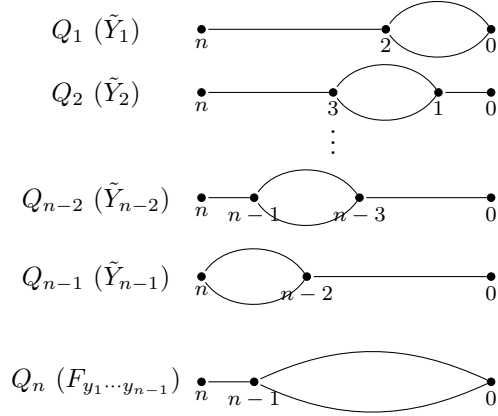
\begin{figure}[H]\begin{center}
\begin{tikzpicture}[shorten >=1pt,node distance=2cm,on grid,auto,/tikz/initial text=,scale=0.7] 
\begin{scope}[shift={(1,0)}]
   \node at (-10,6) (q) {$Q_{n-1}$  $(\tilde{Y}_{n-1})$};
   \node[pnt] at (-2.5,6) (10) {};
   \node[pnt] at (-8,6) (1n) {};
   \node[pnt] at (-6,6) (1n-2) {};
   \node at (-10,7.5) (q) {$Q_{n-2}$ $(\tilde{Y}_{n-2})$};
   \node[pnt] at (-2.5,7.5) (20) {};
   \node[pnt] at (-8,7.5) (2n) {};
   \node[pnt] at (-5,7.5) (2n-3) {};
   \node[pnt] at (-7,7.5) (2n-1) {};
   \node at (-5.5,8.7) (q) {$\vdots$};
\begin{scope}[shift={(0,-.5)}]
   \node at (-10,10) (q) {$Q_2$ ($\tilde{Y}_2$)};
   \node[pnt] at (-2.5,10) (30) {};
   \node[pnt] at (-8,10) (3n) {};
   \node[pnt] at (-3.5,10) (311) {};
   \node[pnt] at (-5.5,10) (333) {};
   \node at (-10,11.2) (q) {$Q_1$ ($\tilde{Y}_1$)};
   \node[pnt] at (-2.5,11.2) (40) {};
   \node[pnt] at (-8,11.2) (4n) {};
   \node[pnt] at (-4.5,11.2) (42) {}; 
  
\end{scope}
\begin{scope}[shift={(-8,11)}] 
  \node at (-2,-7) (q) {$Q_n$ $(F_{y_1\cdots y_{n-1}})$};
   \node[pnt] at (5.5,-7) (011) {};
   \node[pnt] at (0,-7) (n11) {};
   \node[pnt] at (1,-7) (n-11) {};
\end{scope}   
 \end{scope}  
   \draw (10) node[below] {\footnotesize $0$};
   \draw (1n) node[below] {\footnotesize $n$};
   \draw (1n-2) node[below] {\footnotesize $n-2$};
   \draw (20) node[below] {\footnotesize $0$};
   \draw (2n) node[below] {\footnotesize $n$};
   \draw (2n-1) node[below] {\footnotesize $n-1$};
   \draw (2n-3) node[below] {\footnotesize $n-3$};
   \draw (30) node[below] {\footnotesize $0$};
   \draw (3n) node[below] {\footnotesize $n$};
   \draw (333) node[below] {\footnotesize $3$};
   \draw (311) node[below] {\footnotesize $1$};
   \draw (40) node[below] {\footnotesize $0$};
   \draw (4n) node[below] {\footnotesize $n$};
   \draw (42) node[below] {\footnotesize $2$};
   \draw (011) node[below] {\footnotesize $0$};
   \draw (n11) node[below] {\footnotesize $n$};
   \draw (n-11) node[below] {\footnotesize $n-1$};

   \path[every node/.style={font=\scriptsize}]
    (10) edge node {} (1n-2)
    (1n-2) edge [bend left=55] node {} (1n)
    (1n-2) edge [bend right=55] node {} (1n)        
    (20) edge node {} (2n-3)
    (2n-3) edge [bend left=55] node {} (2n-1)
    (2n-3) edge [bend right=55] node {} (2n-1)        
    (2n-1) edge node {} (2n)
    (30) edge node {} (311)
    (311) edge [bend left=55] node {} (333)
    (311) edge [bend right=55] node {} (333)        
    (333) edge node {} (3n)
    (42) edge node {} (4n)
    (40) edge [bend left=55] node {} (42)
    (40) edge [bend right=55] node {} (42)
    (n-11) edge node {} (n11)
    (011) edge [bend left=25] node {} (n-11)
    (011) edge [bend right=25] node {} (n-11);

\end{tikzpicture}
\caption{Component subquivers of the factorization.}
\label{BnQ}
\end{center}\end{figure}

\begin{figure}[H]\begin{center}
\begin{tikzpicture}[shorten >=1pt,node distance=2cm,on grid,auto,/tikz/initial text=] 
   
   \node at (1.25,1.5) (q) {\large $G$};
   \node at (4,.25) (q) {\tiny $\tilde{Y}_2$};
   \node at (3,.25) (q) {\tiny $\tilde{Y}_3$};
   \node at (2,.25) (q) {\tiny $\tilde{Y}_4$};
   \node at (-.5,.25) (q) {\tiny $\tilde{Y}_{n-1}$};
   \node at (-1.5,.25) (q) {\tiny $\tilde{Y}_n$};
   \node at (1.25,-.75) (q) {\tiny $F_{y_2\cdots y_n}$};
   \node[pnt] at (5,0) (01) {};
   \node[pnt] at (4,0) (11) {};
   \node[pnt] at (3,0) (21) {};
   \node[pnt] at (4,.5) (12) {};    
   \node[pnt] at (3,.5) (22) {};
   \node[pnt] at (2,.5) (32) {};
   \node[pnt] at (2,0) (31) {};
   \node[pnt] at (1,.5) (42) {};   
   \node at (.75,.25) (q) {$\dots$};
   \node[pnt] at (.5,0) (n-31) {};
   \node[pnt] at (-.5,.5) (n-22) {};
   \node[pnt] at (-.5,0) (n-21) {};
   \node[pnt] at (-1.5,0) (n-11) {};
   \node[pnt] at (-2.5,.5) (n2) {};
   \node[pnt] at (-1.5,.5) (n-12) {};
   \draw (01) node[below] {\tiny $0$};
   \draw (11) node[below] {\tiny $1$};
   \draw (12) node[above] {\tiny $1$};
   \draw (21) node[below] {\tiny $2$};
   \draw (22) node[above] {\tiny $2$};
   \draw (32) node[above] {\tiny $3$};
   \draw (31) node[below] {\tiny $3$};
   \draw (42) node[above] {\tiny $4$};
   \draw (n-22) node[above] {\tiny $n-2$};
   \draw (n-31) node[below] {\tiny $n-3$};
   \draw (n-12) node[above] {\tiny $n-1$};
   \draw (n-11) node[below] {\tiny $n-1$};
   \draw (n-21) node[below] {\tiny $n-2$};
   \draw (n2) node[above] {\tiny $n$};
   \path[every node/.style={font=\scriptsize}]
    (01) edge node {} (12)
    (01) edge node {} (11)
    (11) edge node {} (21)
    (11) edge node {} (22)
    (21) edge node {} (32)
    (22) edge node {} (32)
    (12) edge node {} (22)
    (32) edge node {} (42)
    (21) edge node {} (31)    
    (31) edge node {} (42)
    (42) edge node {} (n-22)
    (31) edge node {} (n-31)
    (n-31) edge node {} (n-21)
    (n-31) edge node {} (n-22)
    (n-21) edge node {} (n-12)
    (n-21) edge node {} (n-11)
    (n-12) edge node {} (n2)
    (n-22) edge node {} (n-12)    
    (n-11) edge node {} (n2)
    (01) edge [bend left=35] node {} (n-11);

\end{tikzpicture}
\caption{}
\label{BncalQ}
\end{center}\end{figure}
    
Let $\sigma=(n\; n-1\cdots 1)\in S_n$    
    \item[III.] For $\sigma$ as above, $W_i=\{(y_i,\dots,y_{n-1})\mid y_i\in \tilde{Y}_i\}$. Note that $|W_i|=|\tilde{Y}_i||\tilde{Y}_2|\cdots|\tilde{Y}_n|$. Recall that $Q_i^\sigma=Q_{\sigma(i)}$.
    
\end{itemize}

 By Theorem \ref{efficiencyfirststate}, we may compute $\sum_{\substack{y_i\in \tilde{Y}_i}} y_1\cdots y_{n-1}F_{y_1\cdots y_{n-1}}$  (and hence bound computation of $\sum_{y\in\tilde{Y}}yF_y$) in at most 
    $$\sum_{i=1}^{m-1}\vert W_{i-1}\vert\#\Hom((Q_1^\sigma\triangle \cdots \triangle Q_i^\sigma)\cup Q_{i+1}^\sigma;\mathcal{B})$$ multiplications, with $(Q_1^\sigma\triangle \cdots \triangle Q_i^\sigma)\cup Q_{i+1}^\sigma$ as in Figure \ref{GnH1}. Let $\mathcal{H}_i^n$ denote this quiver.

\begin{figure}[H]\begin{center}
\begin{tikzpicture}[shorten >=1pt,node distance=2cm,on grid,auto,/tikz/initial text=] 
   \node at (2.5,-1.5) (q) {\large $\mathcal{H}_i^n$};
   \node at (2.4,.75) (q) {\tiny $\tilde{Y}_i$};
   \node[pnt] at (5,.5) (04) {};
   \node[pnt] at (0,.5) (n4) {};
   \node[pnt] at (3.5,.5) (p5) {};
   \node[pnt] at (2.5,.5) (p+15) {};
   \node[pnt] at (2.5,1) (p+16) {};
   \node[pnt] at (1.5,1) (p+26) {};
   \draw (04) node[below] {\footnotesize $\hat{0}$};
   \draw (n4) node[below] {\footnotesize $\beta_{n-1}$};
   \draw (p5) node[below] {\footnotesize $\beta_{i-2}$};
   \draw (p+15) node[below] {\footnotesize $\beta{i-1}$};
   \draw (p+16) node[above] {\footnotesize $\alpha_{i-1}$};
   \draw (p+26) node[above] {\footnotesize $\alpha_i$};
   \path[every node/.style={font=\scriptsize}]
    (04) edge [bend right=15] node {} (p+16)
    (p+16) edge node {} (p+26)
    (p5) edge node {} (n4)
    (p5) edge node {} (p+16)
    (p+15) edge node {} (p+26)
    (04) edge [bend left=45] node {} (n4);                

\end{tikzpicture}
\caption{}
\label{GnH1}
\end{center}\end{figure}

Thus, the complexity of the computation comes down to determining $\#\Hom(\mathcal{H}_i^n\uparrow \mathcal{Q};\mathcal{B})$, i.e., the number of occurences of each quiver $\mathcal{H}_i^n$ in the Bratteli diagram $\mathcal{B}$. Note that $\mathcal{H}_i^n$ is exactly Figure 14 of \cite{sovII} and (4.1.7) of \cite{maslen} (Moreover, $\mathcal{Q}$ is exactly (4.9) in \cite{maslen}). Then by \cite{sovII,maslen}, 
$\# \Hom(\mathcal{H}_i^n\uparrow \mathcal{Q};\mathcal{B})$ is given by $$\begin{array}{lll}\displaystyle\sum_{\alpha_j,\beta_j\in\mathcal{B}^j}M(\beta_{n-1},\beta_{i-1})M(\beta_{i-1},\beta_{i-2})M(\alpha_{i},\alpha_{i-1})M(\alpha_{i},\beta_{i-1})M(\alpha_{i-1},\beta_{i-2})d_{\alpha_{i-1}}d_{\beta_{n-1}},\end{array}$$
where $M(\rho,\gamma)$ denotes the number of paths from $\gamma$ to $\rho$ in $\mathcal{B}$.

In Appendix \ref{Brauerapp} we use path counting in $\mathcal{B}$ to show 
$$\# \Hom(\mathcal{H}_i^n\uparrow \mathcal{Q};\mathcal{B})\leq \frac{16i-17}{2n-1}\dim(\mathcal{B}r_{n}).$$

Then by Lemma \ref{factorsum},
\begin{equation}\begin{array}{ll}t_{\mathcal{B}r_n}(R)&\displaystyle\leq t_{\mathcal{B}r_{n-1}}(R_{\mathcal{B}r_{n-1}})+2\sum_{i=2}^n\frac{16i-17}{2n-1}\\
&\displaystyle= t_{\mathcal{B}r_{n-1}}(R_{\mathcal{B}r_{n-1}}) + 2 \frac{(n-1)(8n-1)}{(2n-1)} \\
&\displaystyle\leq t_{\mathcal{B}r_1}(R_{\mathcal{B}r_{1}}) + 2 \sum_{i=2}^n \frac{(i-1)(8i-1)}{(2i-1)}\\
&\displaystyle \leq t_{\mathcal{B}r_1}(R_{\mathcal{B}r_{1}}) + (4n^2-n+3)\\
&=  4n^2-n+4.\end{array}\end{equation} \end{proof}

\subsection{The BMW Algebra}
The BMW algebra is a semisimple $\mathbb{C}(q,m,l)$-algebra that can be described in a similar manner to the Brauer algebra (see e.g. \cite{GoodmanHauschild}). Defined independently as the \textit{Kauffman tangle algebra} by Murakami \cite{murakami} and algebraically by Birman and Wenzl \cite{birmanwenzl}, it was shown in an unpublished paper by Wasserman \cite{wasser} that these two notions are equivalent, giving rise to the single BMW algebra. The Bratteli diagram for the BMW algebra is identical to that of the Brauer algebra \cite{hal}. Further, a natural basis, $\mathcal{B}_n=\{T_d\mid d\in Br_n\}$ for the BMW algebra is indexed by Brauer monoid elements. As such, Theorem \ref{brauerthm} extends to the BMW algebra:

\begin{theorem}[cf. Theorem \ref{BMWthm}]
The Fourier transform of an element $f$ in the BMW algebra $\mathcal{BMW}_n$ may be computed at a complete set $R$ of irreducible matrix representations of $\mathcal{BMW}_n$ adapted to the chain of algebras
$$\mathcal{BMW}_n> \mathcal{BMW}_{n-1}>\cdots> \mathcal{BMW}_0$$
in at most $(4n^2-n+4)\dim(\mathcal{BMW}_n)$
operations. 
\end{theorem}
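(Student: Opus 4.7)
The plan is to transplant the proof of Theorem~\ref{brauerthm} into the BMW setting, using two facts recorded just above: the Bratteli diagram for the chain $\mathcal{BMW}_n > \mathcal{BMW}_{n-1} > \cdots > \mathcal{BMW}_0$ coincides with that of the Brauer chain, and the natural basis $\{T_d \mid d \in Br_n\}$ of $\mathcal{BMW}_n$ is indexed by Brauer monoid elements. Because the Bratteli diagram is unchanged, the component quivers $Q_i$, the assembled quiver $\mathcal{Q}$ of Figure~\ref{BncalQ}, the symmetric-difference quivers $\mathcal{H}_i^n$, and the path count bounding $\#\Hom(\mathcal{H}_i^n \uparrow \mathcal{Q}; \mathcal{B})$ will all carry over from the Brauer proof without any change in the numerology. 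What must be redone is the construction of a factor set.

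First I would define $Y^{\mathrm{BMW}}$ and $Y_i^{\mathrm{BMW}} = \{1, T_{r_i}, T_{e_i}\}$ to be the BMW analogues of the Brauer factor sets $Y$ and $Y_i = \{id, r_i, e_i\}$, where $T_{r_i}, T_{e_i}$ are the standard BMW generators. The analogue of Lemma~\ref{facset} is then established by the Kauffman tangle version of the diagrammatic argument: given a basis element $T_d$ with $d \in Br_n \setminus S_n$ and a chosen top-row horizontal edge $e$, the corresponding element $T_y \in Y^{\mathrm{BMW}}$ satisfies $T_y T_{d'} = \lambda_d T_d$ for some invertible scalar $\lambda_d \in \mathbb{C}(q,m,l)$ and some $T_{d'} \in \mathcal{BMW}_{n-1}$; absorbing $\lambda_d^{-1}$ into the subalgebra factor produces the exact factorization required by Definition~\ref{widef}. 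The case $d \in S_n$ reduces to the symmetric group factorization used in the Brauer proof via the Hecke subalgebra inside $\mathcal{BMW}_n$ generated by the $T_{r_i}$.

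Second, because $T_{r_i}$ and $T_{e_i}$ affect only strands $i$ and $i+1$, the images $\tilde{Y}_i^{\mathrm{BMW}}$ satisfy $\tilde{Y}_i^{\mathrm{BMW}} \subseteq \mathbb{C}[\mathcal{B}_{i+1}] \cap \cent(\mathbb{C}[\mathcal{B}_{i-1}])$ in the path algebra of the common Bratteli diagram. Steps~I--III of the SOV approach then run identically to the Brauer case, and Theorem~\ref{efficiencyfirststate} combined with the same per-quiver bound $\frac{16i-17}{2n-1}\dim(\mathcal{BMW}_n)$ yields, via Lemma~\ref{factorsum} and iteration,
\[
t_{\mathcal{BMW}_n}(R) \le t_{\mathcal{BMW}_{n-1}}(R_{\mathcal{BMW}_{n-1}}) + 2\sum_{i=2}^n \frac{16i-17}{2n-1} \le 4n^2 - n + 4,
\]
which multiplied by $\dim(\mathcal{BMW}_n)$ gives the claimed complexity estimate.

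The main --- and essentially only --- obstacle is the factor-set step: in the Brauer algebra $d = yd'$ holds on the nose (modulo the scalar $q^c$ from closed loops), while in $\mathcal{BMW}_n$ the product $T_y T_{d'}$ can acquire writhe-dependent scalar corrections in $\mathbb{C}(q,m,l)$. As indicated, these corrections are invertible and can be absorbed into the $\mathcal{BMW}_{n-1}$-factor without altering the operation count; alternatively one may invoke the Morton--Wassermann basis theorem, which directly supplies the needed product decompositions of $T_d$ in terms of generator products. Once this BMW analogue of Lemma~\ref{facset} is in hand, the remainder of the argument is formally identical to the Brauer case, which is why the same bound is obtained.
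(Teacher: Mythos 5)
Your proposal is correct and takes essentially the same route as the paper: the paper's own proof of Theorem~\ref{BMWthm} consists precisely of the observation that the BMW Bratteli diagram coincides with the Brauer one and that the natural basis $\{T_d \mid d \in Br_n\}$ is indexed by Brauer diagrams, so the entire argument of Theorem~\ref{brauerthm} (factor sets, quivers $\mathcal{H}_i^n$, and path counts) transfers verbatim. Your extra care with the BMW analogue of Lemma~\ref{facset} --- handling the invertible scalar corrections via the Kauffman tangle picture or the Morton--Wassermann basis theorem --- only supplies detail that the paper leaves implicit.
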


\subsection{Background: The Temperley-Lieb Algebra} The Temperley-Lieb algebra, $\mathcal{T}_n$ is most easily defined as the subalgebra of the Brauer algebra generated by $\{id, e_1,\dots, e_{n-1}\}$ with relations inherited by the Brauer algebra: 
$$\begin{array}{llll}
 (1)& e_i^2=qe_i,&(2)&  e_ie_j=e_je_i,\;\; |i-j|>1\\
   (3)&e_ie_{i+1}e_i=e_i,\;\;\;\; e_{i+1}e_ie_{i+1}=e_{i+1}\\
 \end{array}$$
As a diagram algebra, $\mathcal{T}_n$ is generated by diagrams on $2n$ points connected by \textit{nonintersecting} lines. For more background and the equivalence of these two definitions, see \cite{ridout}. The dimension of $\mathcal{T}_n$ is given by the $n$th Catalan number (in \cite{ridout}, a bijection is demonstrated betweeh the set of generating diagrams of $\mathcal{T}_n$ and the set of increasing walks on $\mathbb{Z}^2$ from $(0,0)$, to $(n,n)$ which avoid crossing the diagonal). 

Figure \ref{TnBratt} shows the Bratteli diagram for the chain of Temperley-Lieb algebras $\mathcal{T}_4>\mathcal{T}_3>\mathcal{T}_2>\mathcal{T}_1>\mathcal{T}_0$. Note that we distinguish $\mathcal{T}_1$ from $\mathcal{T}_0$ only so that vertices at level $i$ correspond to representations of $\mathcal{T}_i$. Irreducible representations of $\mathcal{T}_i$ are indexed by partitions of $i$ with two or fewer parts, with an edge between $\rho\in\mathcal{T}^i$ and $\lambda\in\mathcal{T}^{i-1}$ if $\rho$ is obtained from $\lambda$ by adding a box \cite{goodmanwenzl}. Note that the Bratteli diagram of $\mathcal{T}_n$ is a subquiver of Young's lattice, the Bratteli diagram of the symmetric group $S_n$.

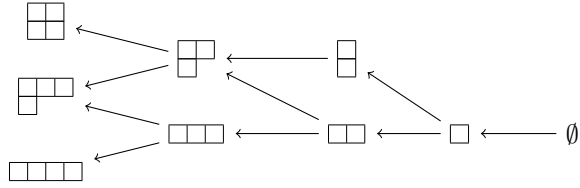
\begin{figure}[ht]\begin{center}
\begin{tikzpicture}[shorten >=1pt,node distance=2cm,on grid,auto,/tikz/initial text=] 

   \node at (-8,1.5) (40) {$\tiny{\yng(2,2)}$};
   \node at (-8,0.5) (41) {$\tiny{\yng(3,1)}$};
   \node at (-8,-.5) (42) {$\tiny{\yng(4)}$};
   \node at (-6,1) (33) {$\tiny{\yng(2,1)}$};
   \node at (-6,0) (32) {$\tiny{\yng(3)}$};
   \node at (-4,1) (23) {$\tiny{\yng(1,1)}$};
   \node at (-4,0) (22) {$\tiny{\yng(2)}$};
   \node at (-2.5,0) (1) {$\tiny{\yng(1)}$};
   \node at (-1,0) (0) {$\emptyset$};
   \path[every node/.style={font=\scriptsize}, ->]
    (0) edge  node {} (1)
    (33) edge node {} (40)
    (1) edge node {} (23)
    (1) edge node {} (22)
    (33) edge node {} (41)
    (32) edge node {} (41)
    (22) edge node {} (32)
    (22) edge node {} (33)
    (32) edge node {} (42)
    (23) edge node {} (33);
    
\end{tikzpicture}
\caption{Bratteli diagram for $\mathcal{T}_4>\mathcal{T}_3>\mathcal{T}_2>\mathcal{T}_1>\mathcal{T}_0$}
\label{TnBratt}
\end{center}\end{figure}

\subsection{Fourier transforms on $\mathcal{T}_n$}

 We first find a factor set for $\mathcal{T}_n$ over $\mathcal{T}_{n-1}$, then use the SOV approach to compute the Fourier transform of $f=\sum_{d\in T_n} f(d)d$, for $T_n$ the set of diagrams generating $\mathcal{T}_n$.

Let $E=\{e_{i}\cdots e_{n-1}\mid 1\leq i\leq n-1\}$
It follows immediately from Lemma \ref{facset} that $E\cup\{id\}$ is a factor set for $\mathcal{T}_n$ over $\mathcal{T}_{n-1}$. Note that this is also the factor set arising from the Jones Normal Form of elements in the Temperley-Lieb algebra (see e.g. Proposition 2.3 of \cite{ridout}).

\begin{theorem}[cf. Theorem \ref{templiebthm}]
The Fourier transform of an element $f$ in the Temperley-Lieb algebra $\mathcal{T}_n$ may be computed at a complete set $R$ of irreducible matrix representations of $\mathcal{T}_n$ adapted to the chain of algebras
$$\mathcal{T}_n> \mathcal{T}_{n-1}>\cdots> \mathcal{T}_0=\mathbb{C}(q)$$
in at most $\displaystyle\frac{n^3+9n^2+8n-12}{6}\dim(\mathcal{T}_n)$ 
operations. 
\end{theorem}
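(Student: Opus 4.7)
The plan is to mirror the proof of Theorem \ref{brauerthm} for the Brauer algebra, adapting the Separation of Variables approach to the simpler factor structure of the Temperley-Lieb algebra and then working in its restricted Bratteli diagram. First I would take the factor set $E \cup \{id\}$ for $\mathcal{T}_n$ over $\mathcal{T}_{n-1}$ (already noted to follow from the Jones Normal Form / Lemma \ref{facset}) and express each nontrivial element $e_i e_{i+1} \cdots e_{n-1}$ as a product $y_i y_{i+1} \cdots y_{n-1}$ with $y_j \in Y_j := \{id, e_j\}$. Unlike the Brauer case, there is no symmetric group factor $r_i$, so $|Y_j| = 2$ and the factor sets $Y_j$ consist only of the identity and a single cap; this is the primary simplification. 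Each $\tilde{Y}_j$ sits inside $\mathbb{C}[\mathcal{B}_{j+1}] \cap \cent(\mathbb{C}[\mathcal{B}_{j-1}])$, so by Lemma \ref{factorsum} and iterated application of Theorem \ref{efficiencyfirststate} the computation reduces to evaluating the appropriate sum $\sum y_1 \cdots y_{n-1} F_{y_1 \cdots y_{n-1}}$ in the path algebra of the Temperley-Lieb Bratteli diagram.

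Next I would identify the relevant component subquivers $Q_j$ and the glued quiver $\mathcal{Q}$. Since only the $e_j$ generators are in play, each $Q_j$ contributes a single ``cap'' of span $2$ centered at level $j$, and concatenating them under the permutation $\sigma = (n\; n{-}1\; \cdots\; 1)$ produces quivers $\mathcal{H}_i^n$ of the same topological form as in the Brauer case (Figure \ref{GnH1}). Consequently, by the same bookkeeping used in the Brauer proof, the total multiplication count is bounded by
\[
\sum_{i=2}^{n} |W_{i-1}|\, \#\Hom(\mathcal{H}_i^n; \mathcal{B}),
\]
where now $|W_{i-1}| = 2^{n-i+1}$ is substantially smaller than in the Brauer setting, but $\mathcal{B}$ is the subquiver of Young's lattice consisting of partitions with at most two parts, so path counts do not enjoy the same $1/(2n-1)$ shrinkage factor.

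The main obstacle is therefore the careful evaluation of $\#\Hom(\mathcal{H}_i^n; \mathcal{B})$ in the two-row Bratteli diagram. This quantity factors as a sum over intermediate vertices $\alpha_j, \beta_j$ of products of branching multiplicities $M(\cdot,\cdot)$ (which are each $0$ or $1$ because the diagram is multiplicity-free) weighted by dimensions $d_{\alpha_{i-1}} d_{\beta_{n-1}}$, where $d_\lambda$ for a two-row partition $\lambda = (a,b)$ of $k$ is the ballot number $\tfrac{a-b+1}{a+1}\binom{k}{a}$. Using the dimension identity $\dim(\mathcal{T}_n) = \sum_\lambda d_\lambda^2$ (summed over two-row partitions of $n$) together with the observation that fixing $\beta_{i-1}, \beta_{i-2}, \alpha_{i-1}$ determines only $O(1)$ choices at each level, I expect to obtain a bound of the shape
\[
\#\Hom(\mathcal{H}_i^n; \mathcal{B}) \;\leq\; c_i(n)\,\dim(\mathcal{T}_n),
\]
with $c_i(n)$ linear in both $i$ and $n-i$; this is the technical heart, and I would likely relegate it to an appendix along the lines of the Brauer computation.

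Finally, I would assemble the estimate through the recursion from Lemma \ref{factorsum}, writing
\[
t_{\mathcal{T}_n}(R) \;\leq\; t_{\mathcal{T}_{n-1}}(R_{\mathcal{T}_{n-1}}) + \sum_{i=2}^{n} 2^{n-i+1}\,c_i(n),
\]
and then telescoping down to $\mathcal{T}_1$. Matching constants against the claimed $\tfrac{n^3 + 9n^2 + 8n - 12}{6}$ requires that the per-level increment be (after summation over $i$) quadratic in $n$, so that telescoping over $n$ levels produces the cubic $n^3/6$ leading term. I anticipate the algebraic manipulation of the sums $\sum_{i=2}^n i(n-i+1)$ and adjacent telescoping identities to absorb the factor-of-$2$ weights cleanly, yielding the stated closed-form upper bound; the asymptotic $O(\dim(\mathcal{T}_n)\log^3(\dim(\mathcal{T}_n)))$ then follows from $\dim(\mathcal{T}_n) \sim 4^n / n^{3/2}$, so that $\log \dim(\mathcal{T}_n) \sim n$.
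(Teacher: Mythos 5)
Your overall route is the paper's route: the factor set $E\cup\{id\}$ from Lemma \ref{facset}/Jones normal form, the factorization through $Y_i=\{id,e_i\}$, the observation that the component quivers and the glued quivers $\mathcal{H}_i^n$ of Figure \ref{GnH1} are identical to the Brauer case, the reduction to counting $\#\Hom(\mathcal{H}_i^n;\mathcal{B})$ in the two-row sublattice of Young's lattice (done in Appendix \ref{Templiebapp} via Lemma \ref{TnH1} and the bound $\jmp(\beta_i)^2\le 2i$), and the final telescoping via Lemma \ref{factorsum}. So the architecture is right. However, your quantitative assembly has a genuine gap: you insert the weight $\vert W_{i-1}\vert=2^{n-i+1}$ into the per-stage cost and assert that these factors are ``absorbed cleanly.'' They are not. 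With $\#\Hom(\mathcal{H}_i^n;\mathcal{B})\le c_i(n)\dim(\mathcal{T}_n)$ and $c_i(n)$ polynomial, the sum $\sum_{i=2}^n 2^{n-i+1}c_i(n)$ is of order $2^n$, i.e.\ roughly $\sqrt{\dim(\mathcal{T}_n)}$ up to polynomial factors, and the recursion would then yield something like $O(\dim(\mathcal{T}_n)^{3/2})$ rather than the claimed $O(\dim(\mathcal{T}_n)\log^3\dim(\mathcal{T}_n))$. The paper's recursion carries no such exponential weight: the increment is $\sum_{i=2}^n\frac{(4i-6+2i^2)(n+1)n}{i(2n)(2n-1)}$, which is $O(n^2)$. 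The point of the SOV bookkeeping (Theorem \ref{efficiencyfirststate} together with the collapse $X_i\cong A(Q_i;\mathcal{B})$) is that the sum over the tail variables $(y_{i+1},\dots,y_{n-1})$ is performed innermost-first, so the intermediate data at stage $i$ is a single element of the quiver-morphism space attached to $Q_1^\sigma\triangle\cdots\triangle Q_i^\sigma$ rather than a list indexed by all $2^{n-i}$ remaining tuples; the cost of stage $i$ is governed by $\#\Hom\bigl((Q_1^\sigma\triangle\cdots\triangle Q_i^\sigma)\cup Q_{i+1}^\sigma;\mathcal{B}\bigr)$ alone (times the number of non-identity choices of $y_i$, which here is $1$). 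Without this collapse your bound does not close.

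A secondary miscalibration: you predict $c_i(n)$ ``linear in both $i$ and $n-i$.'' That would make the per-level increment $\sum_i c_i(n)\sim n^3$, which telescopes to $n^4$, contradicting your own (correct) requirement that the increment be quadratic in $n$. The actual bound, Corollary \ref{TnH3}, is $c_i(n)=\frac{(4i-6+2i^2)(n+1)n}{i(2n)(2n-1)}=O(i)$ uniformly in $n$, obtained by first reducing $\#\Hom(\mathcal{H}_i^n;\mathcal{B})$ to $\#\Hom(\mathcal{H}_i^i;\mathcal{B})$ times $\dim(\mathcal{T}_{n-1})/\dim(\mathcal{T}_{i-1})$ and then bounding the local count by $\frac{\dim(\mathcal{T}_{i-1})^2}{\dim(\mathcal{T}_{i-2})}+\sum_{\beta_{i-1}}\jmp(\beta_{i-1})^2 d_{\beta_{i-1}}^2$. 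Your instinct to use multiplicity-freeness and the ballot-number dimensions is sound, but the two-step reduction (from $\mathcal{H}_i^n$ to $\mathcal{H}_i^i$, then a jump-number estimate) is what actually delivers a bound summing to the stated $\frac{n^3+9n^2+8n-12}{6}$.
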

\begin{proof}
As noted above, $Y=E\cup\{id\}$ is a factor set for $\mathcal{T}_{n}$ over $\mathcal{T}_{n-1}$. For  $Y_i=\{id,e_i\}$, $Y\subseteq\{y_1y_2\cdots y_{n-1}\mid y_i\in Y_i\}$, giving a factorization of $Y$ as required by Step I of the SOV approach. Note that this is identical to the factorization in the proof of Theorem \ref{brauerthm}, with the only exception the size of $Y_i$. Thus, following the same steps in the SOV approach, the complexity of the computation comes down to determining $\#\Hom(\mathcal{H}_i^n\uparrow \mathcal{Q};\mathcal{B})$, i.e., the number of occurences of each quiver $\mathcal{H}_i^n$ of Figure \ref{GnH1} in the Bratteli diagram $\mathcal{B}$ associated to the chain of Temperley-Lieb algebras. 

Again by \cite{sovII,maslen}, 
$\# \Hom(\mathcal{H}_i^n\uparrow \mathcal{Q};\mathcal{B})$ is given by $$\begin{array}{lll}\displaystyle\sum_{\alpha_j,\beta_j\in\mathcal{B}^j}M(\beta_{n-1},\beta_{i-1})M(\beta_{i-1},\beta_{i-2})M(\alpha_{i},\alpha_{i-1})M(\alpha_{i},\beta_{i-1})M(\alpha_{i-1},\beta_{i-2})d_{\alpha_{i-1}}d_{\beta_{n-1}},\end{array}$$
where $M(\rho,\gamma)$ denotes the number of paths from $\gamma$ to $\rho$ in $\mathcal{B}$.

In Appendix \ref{Templiebapp} we use path counting in $\mathcal{B}$ to show 
$$\# \Hom(\mathcal{H}_i^n\uparrow \mathcal{Q};\mathcal{B})\leq \frac{(4i-6+2i^2)(n+1)(n)}{i(2n)(2n-1)}\dim(\mathcal{T}_{n}).$$

Then by Lemma \ref{factorsum},
\begin{equation}\begin{array}{ll}t_{\mathcal{T}_n}(R)&\displaystyle\leq t_{\mathcal{T}_{n-1}}(R_{\mathcal{T}_{n-1}})+\sum_{i=2}^n\frac{(4i-6+2i^2)(n+1)(n)}{i(2n)(2n-1)}\\ 
&\displaystyle\leq t_{\mathcal{T}_1}(R_{\mathcal{T}_{1}}) +  \sum_{i=2}^n \frac{i(i+5)(i+1)}{(4i-2)}\\ 
&\leq \displaystyle \frac{n^3+9n^2+8n-12}{6}.\end{array}\end{equation}

\end{proof}

\subsection{General Result}
We next give a general result (Theorem \ref{General}) to find efficient Fourier transforms on a finite dimensional semisimple algebra $A$ with special subalgebra structure. As the proof follows the same structure as the proof of Theorem \ref{brauerthm}, we leave it as an exercise.

Suppose 
$$A=A_n> A_{n-1}>\cdots> A_0,$$
is a chain of subalgebras of $A$ with subsets $Y_i\subseteq A_i$ such that
\begin{itemize}
\item[(1)] $Y_1=A_1$
\item[(2)] $ A_i\subseteq Y_2\cdots Y_i A_{i-1}$ for $2\leq i\leq n$.
\item[(3)] $Y_i$ commutes with $A_{i-2}.$
\end{itemize}
Note that the factor sets used for the Brauer algebra satisfied these three properties.

Let $\mathcal{B}$ be the Bratteli diagram associated to the chain 
$$A_n> A_{n-1}>\cdots>A_0,$$ and let $\{\mathbb{C}[\mathcal{B}_{i}]\}$ be the associated chain of path algebras. Let $$M(A_i,A_j):=\max M(\alpha_i,\alpha_j)$$ over all $\alpha_i\in\mathcal{B}^i, \alpha_j\in\mathcal{B}^j$ and let $\vert \hat{A}_i\vert$ denote the number of irreducible representations in a complete set of inequivalent irreducible representations of $A_i$.
\begin{theorem}\label{General}
Let $A_i$, $Y_i$ be as described above. Then the Fourier transform of an element $f\in A$ may be computed at a complete set $R$ of irreducible representations of $A_n$ adapted to the chain
$$A_n> A_{n-1}>\cdots> A_0$$
in at most $$\dim(A_n) \sum_{k=1}^n \sum_{i=2}^k M(A_{i-1},A_{i-2})^2\vert \hat{A}_{i-2}\vert \frac{\dim(A_i)}{\dim(A_{i-1})}\frac{\dim(A_{k-1})}{\dim(A_k)} \prod_{j=i}^k \vert B_j\vert$$ operations.
\end{theorem}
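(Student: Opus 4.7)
The plan is to follow the template of the proof of Theorem \ref{brauerthm} in the abstract chain setting. By hypothesis (2) combined with $Y_1 = A_1$, the product $Y_2 Y_3 \cdots Y_k$ is a factor set for $A_k$ over $A_{k-1}$ for each $k$. Applying the iterated form \eqref{iterate} of Lemma \ref{factorsum} along the chain $A_n > A_{n-1} > \cdots > A_0$ using these factor sets reduces the problem to
$$t_{A_n}(R) \leq t_{A_0}(R_{A_0}) + \sum_{k=1}^{n} m_{A_k}(R_{A_k},\, Y_2 \cdots Y_k,\, A_{k-1}),$$
so it suffices to bound each level-$k$ cost $m_{A_k}$.

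For fixed $k$, I would apply Algorithm \ref{alg2} to the sum $\sum y F_y$, where $y$ ranges over the path-algebra images of elements $y = y_2 y_3 \cdots y_k$ with $y_j \in Y_j$. The given factorization supplies the set $X \subseteq (\mathbb{C}[\mathcal{B}_k])^k$ of Step I, with $m = k$ factors (the $k-1$ factors $y_j$ together with the final element $F_y \in \mathbb{C}[\mathcal{B}_{k-1}]$). Hypothesis (3) places each $\tilde{y}_j$ in $\mathbb{C}[\mathcal{B}_j] \cap \cent(\mathbb{C}[\mathcal{B}_{j-2}])$, so in the notation of Definition \ref{widef} we have $j^+ = j$ and $j^- = j-2$, and the associated quiver $Q_j$ has exactly the shape of Figure \ref{Qi}. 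Choosing the permutation $\sigma$ that multiplies the factors in order of increasing $j$, Theorem \ref{efficiencyfirststate} then bounds the level-$k$ cost by
$$\sum_{i=2}^{k} |W_{i-1}| \cdot \#\Hom(\mathcal{H}_i^k;\, \mathcal{B}),$$
where $|W_{i-1}| = \prod_{j=i}^{k} |Y_j|$ and $\mathcal{H}_i^k$ has exactly the shape of Figure \ref{GnH1}, with the outer path now running from level $0$ to level $k$ and the local detour at levels $i-2,\,i-1,\,i$.

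The main obstacle will be the uniform path count for $\#\Hom(\mathcal{H}_i^k;\, \mathcal{B})$. Unlike the Brauer and Temperley-Lieb cases, the Bratteli diagram is not explicit here, so the bound must be controlled entirely by the abstract parameters. As in the Brauer computation, the count can be written as
$$\sum_{\alpha_j,\beta_j \in \mathcal{B}^j} M(\beta_{k-1},\beta_{i-1}) M(\beta_{i-1},\beta_{i-2}) M(\alpha_i,\alpha_{i-1}) M(\alpha_i,\beta_{i-1}) M(\alpha_{i-1},\beta_{i-2})\, d_{\alpha_{i-1}}\, d_{\beta_{k-1}}.$$
I would dominate $M(\beta_{i-1}, \beta_{i-2})$ and $M(\alpha_{i-1}, \beta_{i-2})$ by $M(A_{i-1}, A_{i-2})$, collapse the sums over $\alpha_i$ and $\beta_{k-1}$ via the standard Bratteli composition identities, and replace the vertex sum at level $i-2$ by $|\hat{A}_{i-2}|$. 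Collecting the emerging dimension ratios yields the factor $\dim(A_k)\, M(A_{i-1}, A_{i-2})^2\, |\hat{A}_{i-2}|\, \dim(A_i)\, \dim(A_{k-1}) / (\dim(A_{i-1})\, \dim(A_k))$. Substituting into the outer sum over $k$, bounding $\dim(A_k) \leq \dim(A_n)$, and factoring $\dim(A_n)$ outside produces exactly the claimed bound.
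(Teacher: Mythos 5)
Your outline is exactly the route the paper intends: the paper gives no proof of Theorem \ref{General} at all, stating only that it ``follows the same structure as the proof of Theorem \ref{brauerthm}'' and leaving it as an exercise, and your proposal is precisely that template --- factor sets $Y_2\cdots Y_k$ for $A_k$ over $A_{k-1}$ via hypothesis (2), the iterated bound \eqref{iterate}, the placement $\tilde y_j\in\mathbb{C}[\mathcal{B}_j]\cap\cent(\mathbb{C}[\mathcal{B}_{j-2}])$ from hypothesis (3) giving $j^+=j$, $j^-=j-2$, the quiver $\mathcal{H}_i^k$ of Figure \ref{GnH1}, and $|W_{i-1}|=\prod_{j=i}^k|Y_j|$, with the $\frac{1}{\dim(A_k)}$ normalization in $m_{A_k}$ supplying the factor $\frac{\dim(A_{k-1})}{\dim(A_k)}$. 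The one place where your sketch glosses over real work is the final collapse of the hom-count: after dominating the two local multiplicities by $M(A_{i-1},A_{i-2})$ and summing out $\beta_{i-2}$ and $\alpha_{i-1}$, you still need
$$\sum_{\beta_{i-1}}\Bigl(\sum_{\alpha_i}M(\alpha_i,\beta_{i-1})d_{\alpha_i}\Bigr)\Bigl(\sum_{\beta_{k-1}}M(\beta_{k-1},\beta_{i-1})d_{\beta_{k-1}}\Bigr)\leq\frac{\dim(A_i)}{\dim(A_{i-1})}\dim(A_{k-1}),$$
and this is not a ``standard Bratteli composition identity'': the identity $\sum_{\alpha}M(\alpha,\beta)d_{\alpha}=\frac{\dim(A_i)}{\dim(A_{i-1})}d_{\beta}$ used in \cite{sovII} and in the Brauer appendix comes from $A_i$ being free of rank $\dim(A_i)/\dim(A_{i-1})$ over $A_{i-1}$ (true for group algebras and for the Brauer/BMW/Temperley--Lieb chains), whereas hypothesis (2) only gives a spanning statement, which bounds these induction multiplicities by $\prod_{j\leq i}|Y_j|\,d_{\beta}$ rather than by the dimension ratio. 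So to complete the argument in the stated generality you must either invoke such a freeness/proportionality property of the chain or rework the estimate so that the $|Y_j|$'s absorb this step; as written, that inequality is asserted rather than proved.
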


\section{Further Directions}\label{conclusion}

In this paper we extended the SOV approach of \cite{sovi,sovII} to the semisimple algebra setting and provided the first known complexity upper bounds for Fourier transforms on the Brauer, BMW, and Temperley-Lieb algebras.

Efficiency counts are determined by the choice of factor sets, size of the factorization sets of these factor sets, and the number of occurrences of the corresponding subgraphs in the Bratteli diagram. While the choice of factor set for the Brauer algebra (Lemma \ref{facset}) is easy to describe, it is by no means `canonical'. On the other hand, while the choice of factor set for the Temperley-Lieb algebra is canonical in that it comes from Jones Normal Form, the bound of Theorem \ref{templiebthm} is worse than anticipated given that the Temperley-Lieb algebra is a subalgebra of the Brauer algebra. Future directions could explore different choices of factor sets and the bounds they provide, as well as more refined path-counting. 

The examples in this paper only touch on the wealth of semisimple algebras whose structure and Bratelli diagrams are known. In \cite{grood}, Grood constructs the irreducible representations of the rook partition algebra and the associated Bratteli diagram, while Halverson et al. \cite{halverson, halver} determine analogues of the seminormal representations of $S_n$ for the rook-Brauer algebra and planar-rook algebra. It is an interesting and ongoing project to extend and apply the results of this paper to these other examples by developing an understanding of the centralizers and irreducible representations and to explore the resulting combinatorial path-counting questions to provide efficient counts. 


\appendix

\section{Brauer Algebra Combinatorial Lemmas}\label{Brauerapp}

Let $\mathcal{B}$ denote the Bratteli diagram associated to the chain of Brauer algebras $\mathcal{B}r_n>\mathcal{B}r_{n-1}>\cdots \mathcal{B}r_1>\mathcal{B}r_0$ (Figure \ref{BrnBratt}). 
The following two lemmas provide a bound for $\#\Hom (\mathcal{H}_i^n\uparrow G;\mathcal{B})$, for $\mathcal{H}_i^n$ as in Figure \ref{GnH1}.
\begin{lemma}\label{BrnH1}
\begin{itemize}
\item[]
\item[(1)] $\displaystyle\#\Hom(\mathcal{H}_i^n\uparrow G;\mathcal{B})=\frac{ \dim(\mathcal{B}r_{n-1})}{\dim(\mathcal{B}r_{i-1})}\#\Hom(\mathcal{H}_i^i\uparrow G;\mathcal{B}),$
\item[(2)] $\#\Hom(\mathcal{H}_i^i\uparrow G;\mathcal{B})$
$$\leq 2\displaystyle\frac{\dim(\mathcal{B}r_{i-1})^2}{\dim(\mathcal{B}r_{i-2})}+\sum_{\beta_{i-1}\in\mathcal{B}^{i-1}}(4\jmp(\beta_{i-1})^2+2\jmp(\beta_{i-1})+1)(d_{\beta_{i-1}})^2,$$
\end{itemize} where $\jmp$ denotes the jump of a partition, i.e, the number of ways to remove a single box to form a new partition.
\end{lemma}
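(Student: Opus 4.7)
The plan is to treat the two parts separately, leveraging the explicit closed form for $\#\Hom(\mathcal{H}_i^n\uparrow \mathcal{Q};\mathcal{B})$ derived in the proof of Theorem~\ref{brauerthm}.

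For part~(1), I would observe that in this formula the index $\beta_{n-1}$ appears only through the factor $M(\beta_{n-1},\beta_{i-1})\,d_{\beta_{n-1}}$. The strategy is therefore to isolate the partial sum
$$\sum_{\beta_{n-1}\in\mathcal{B}^{n-1}} M(\beta_{n-1},\beta_{i-1})\,d_{\beta_{n-1}}$$
and apply a Plancherel-type identity for the Brauer chain $\mathcal{B}r_{i-1}<\mathcal{B}r_{n-1}$ to identify it with $\tfrac{\dim(\mathcal{B}r_{n-1})}{\dim(\mathcal{B}r_{i-1})}\,d_{\beta_{i-1}}$. Using that $M(\beta_{i-1},\beta_{i-1})=1$ corresponds to the trivial path of length zero, the residual sum is precisely the defining expression for $\#\Hom(\mathcal{H}_i^i\uparrow \mathcal{Q};\mathcal{B})$, yielding the stated equality.

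For part~(2), I would start from the same sum specialized to $n=i$ and split the enumeration of the diamond $(\beta_{i-2},\alpha_{i-1},\beta_{i-1},\alpha_i)$ according to whether $\alpha_{i-1}=\beta_{i-1}$. In the collapsed case, multiplicity-freeness of the Brauer Bratteli diagram means each repeated $M$-factor squares back to itself, so the contribution telescopes to $\sum_{\beta_{i-1}} d_{\beta_{i-1}}^2\,N_{\uparrow}(\beta_{i-1})\,N_{\downarrow}(\beta_{i-1})$, where the up- and down-neighbourhood sizes of $\beta_{i-1}$ in $\mathcal{B}$ both equal $2\jmp(\beta_{i-1})+1$ because the Brauer branching rule permits both adding and removing a single box. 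This is the source of the polynomial-in-$\jmp$ summand. In the non-collapsed case $\alpha_{i-1}\neq\beta_{i-1}$, one is counting proper diamonds in the Bratteli diagram; I would apply the AM-GM inequality $d_{\alpha_{i-1}}d_{\beta_{i-1}}\leq \tfrac{1}{2}(d_{\alpha_{i-1}}^2+d_{\beta_{i-1}}^2)$, exploit the symmetry of the sum in the two labels, and then collapse the remaining middle-level index by pairing the Plancherel identity of part~(1) applied to the pair of levels $(i-2,i-1)$ with the Wedderburn identity $\sum_\gamma d_\gamma^2 = \dim(\mathcal{B}r_{i-1})$. This yields the bound $2\dim(\mathcal{B}r_{i-1})^2/\dim(\mathcal{B}r_{i-2})$.

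The hardest step will be isolating the correct constant in the non-collapsed diamond estimate and, relatedly, ensuring that the jump coefficient in the final bound is $(4\jmp^2+2\jmp+1)$ rather than the naive $(2\jmp+1)^2=4\jmp^2+4\jmp+1$. This requires choosing the split so that the linear-in-$\jmp$ slack from the collapsed case is absorbed into the first summand of the stated bound via the same $\dim^2/\dim$ estimate, rather than left in the second. The combinatorial symmetry of the Brauer branching rule (versus the one-sided Young lattice for $S_n$) makes the neighbourhood bookkeeping more intricate than the corresponding step in \cite{maslen}, but once the split is chosen correctly the remainder is a routine application of path-counting identities in $\mathcal{B}$.
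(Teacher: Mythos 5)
Your overall architecture matches the paper's: part (1) is obtained by summing out the index $\beta_{n-1}$, and part (2) by splitting the diamond count according to whether $\alpha_{i-1}=\beta_{i-1}$, with the collapsed case contributing $\sum_{\beta_{i-1}}(2\jmp(\beta_{i-1})+1)^2d_{\beta_{i-1}}^2$ exactly as in the paper (both the up- and down-neighbourhoods of $\beta_{i-1}$ have size $2\jmp+1$ under the add-or-remove-a-box rule). The genuine gap is in your non-collapsed estimate, and it sits exactly where you flag uncertainty. The paper's engine there is not AM-GM, symmetrization, or a Wedderburn identity at the middle level; it is a rigidity fact specific to the Brauer branching rule: two \emph{distinct} partitions $\alpha_{i-1}\neq\beta_{i-1}$ admitting a common neighbour $\alpha_i$ at level $i$ jointly determine $\alpha_i$ up to at most \emph{two} choices (one partition of $k+1$ and one of $k-1$ if both partition $k$; at most two removals if their sizes differ by two). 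Without this cap on the number of common upper neighbours, your reduction via $d_{\alpha_{i-1}}d_{\beta_{i-1}}\leq\tfrac12(d_{\alpha_{i-1}}^2+d_{\beta_{i-1}}^2)$ and symmetry leaves you with $\sum_{\alpha_{i-1}}d_{\alpha_{i-1}}^2\cdot(\text{number of proper diamonds through }\alpha_{i-1})$, and nothing in your outline controls that diamond count; Plancherel plus Wedderburn alone will not produce the constant $2$.

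Relatedly, you have mislocated the source of the correction that turns $(2\jmp+1)^2=4\jmp^2+4\jmp+1$ into $4\jmp^2+2\jmp+1$: it does not come from ``absorbing slack from the collapsed case,'' but from the non-collapsed one. After fixing one of the (at most two) choices of $\alpha_i$, the paper bounds the off-diagonal sum by the full quadratic form $\sum_{\beta_{i-2}}\bigl(\sum_{\gamma}M(\gamma,\beta_{i-2})d_{\gamma}\bigr)^2$ \emph{minus its diagonal}, and the subtracted diagonal is at least $\sum_{\beta_{i-1}}\jmp(\beta_{i-1})d_{\beta_{i-1}}^2$; doubling over the two choices of $\alpha_i$ gives $\sum_{\alpha_{i-1}\neq\beta_{i-1}}\leq 2\bigl(\dim(\mathcal{B}r_{i-1})^2/\dim(\mathcal{B}r_{i-2})-\sum_{\beta_{i-1}}\jmp(\beta_{i-1})d_{\beta_{i-1}}^2\bigr)$, which added to the collapsed contribution yields the stated coefficient. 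For part (1), your plan of collapsing $\sum_{\beta_{n-1}}M(\beta_{n-1},\beta_{i-1})d_{\beta_{n-1}}$ to $\frac{\dim(\mathcal{B}r_{n-1})}{\dim(\mathcal{B}r_{i-1})}d_{\beta_{i-1}}$ is the same argument the paper imports from Lemma D.3 of the SOV paper, but you should not label it a routine ``Plancherel-type identity'': for non-group towers restriction multiplicities do not automatically compute induced dimensions (already at levels $2$ and $3$ of the Brauer diagram the pointwise identity is delicate), so this step requires an actual justification rather than an appeal to the group-algebra fact.
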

\begin{proof}
Part (1) has the same proof as Lemma D.3 in \cite{sovII}.

To prove (2), consider 
$$\begin{array}{l}\#\Hom(\mathcal{H}_i^i\uparrow G;\mathcal{B})\\
=\displaystyle\sum_{\alpha_j,\beta_j\in\mathcal{B}^j}M_\mathcal{B}(\beta_{i-1},\beta_{i-2})M_\mathcal{B}(\alpha_{i},\alpha_{i-1})M_\mathcal{B}(\alpha_{i},\beta_{i-1})M_\mathcal{B}(\alpha_{i-1},\beta_{i-2})d_{\beta_{i-1}}d_{\alpha_{i-1}}\\
=\displaystyle\sum_{\alpha_{i-1}\neq\beta_{i-1}}+\sum_{\alpha_{i-1}=\beta_{i-1}},\end{array}$$
for $\displaystyle\sum_{\alpha_{i-1}\neq\beta_{i-1}}$ the sum
$$\begin{array}{l}\displaystyle\sum_{\substack{\alpha_j,\beta_j\in\mathcal{B}^j\\\alpha_{i-1}\neq\beta_{i-1}}}M_\mathcal{B}(\beta_{i-1},\beta_{i-2})M_\mathcal{B}(\alpha_{i},\alpha_{i-1})M_\mathcal{B}(\alpha_{i},\beta_{i-1})M_\mathcal{B}(\alpha_{i-1},\beta_{i-2})d_{\beta_{i-1}}d_{\alpha_{i-1}}\end{array}$$
and $\displaystyle\sum_{\alpha_{i-1}=\beta_{i-1}}$ the sum
$$\begin{array}{l}\displaystyle\sum_{\substack{\alpha_j,\beta_j\in\mathcal{B}^j\\\alpha_{i-1}=\beta_{i-1}}}M_\mathcal{B}(\beta_{i-1},\beta_{i-2})^2M_\mathcal{B}(\alpha_{i},\beta_{i-1})^2(d_{\beta_{i-1}})^2.\end{array}$$

First suppose $\alpha_{i-1}$ and $\beta_{i-1}$ are distinct partitions. Then they jointly determine $\alpha_i$ up to two choices. This is clear if $\alpha_{i-1}$ and $\beta_{i-1}$ both partition $k$, as they then jointly determine exactly one partition of $k+1$ and one partition of $k-1$. Now suppose, without loss of generality, that $\alpha_{i-1}$ is a partition of $k$ while $\beta_{i-1}$ is a partition of $k-2$. Then to both be connected to a vertex, $\alpha_i$, at level $i$, $\beta_{i-1}$ must be obtained from $\alpha_{i-1}$ by removing two boxes, which can only be done in two ways.

Then as in the proof of Lemma D.3 of \cite{sovII},
\begin{equation}\label{Brone}\sum_{\alpha_{i-1}\neq\beta_{i-1}}\leq 2\left(\frac{\dim(\mathcal{B}r_{i-1})^2}{\dim(\mathcal{B}r_{i-2})}-\sum_{}\jmp(\beta_{i-1})(d_{\beta_{i-1}})^2\right). \end{equation}

Now suppose $\alpha_{i-1}=\beta_{i-1}$. Then $\alpha_i$ is obtained from $\beta_{i-1}$ by either adding or removing a box, and similarly for $\beta_{i-2}$. Thus, \begin{equation}\label{Brtwo}\sum_{\alpha_{i-1}=\beta_{i-1}}=\sum_{\beta_{i-1}\in\mathcal{B}^{i-1}}(2\jmp(\beta_{i-1})+1)(2\jmp(\beta_{i-1})+1)(d_{\beta_{i-1}})^2.\end{equation}

Summing equations (\ref{Brone}) and (\ref{Brtwo}) gives part (2).

\end{proof}
Combining Lemma \ref{BrnH1} with the fact that $\jmp(\beta_i)^2\leq 2i$ (see proof of \cite{maslen}[Lemma 5.3]) gives the following bound:
\begin{corollary}\label{BrnH3} 
$\#\Hom(\mathcal{H}_i^n\uparrow G;\mathcal{B})\leq \frac{16i-17}{2n-1}\dim(\mathcal{B}r_n)$.
\end{corollary}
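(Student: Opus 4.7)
The plan is to combine the two parts of Lemma \ref{BrnH1} with the stated bound $\jmp(\beta)^2 \leq 2|\beta|$ and the Wedderburn identity $\sum_{\beta \in \mathcal{B}^{j}} d_\beta^2 = \dim(\mathcal{B}r_j)$. First I would rewrite
\[
\#\Hom(\mathcal{H}_i^n\uparrow G;\mathcal{B}) = \frac{\dim(\mathcal{B}r_{n-1})}{\dim(\mathcal{B}r_{i-1})}\,\#\Hom(\mathcal{H}_i^i\uparrow G;\mathcal{B})
\]
using part (1), reducing the problem to estimating $\#\Hom(\mathcal{H}_i^i\uparrow G;\mathcal{B})$, then substitute the inequality of part (2).

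The next step is to handle the sum $\sum_{\beta_{i-1}\in\mathcal{B}^{i-1}}\bigl(4\jmp(\beta_{i-1})^2+2\jmp(\beta_{i-1})+1\bigr)d_{\beta_{i-1}}^2$. Since $\jmp(\beta_{i-1})$ is a nonnegative integer, I would use the elementary observation $\jmp \leq \jmp^2$ to collapse the linear term, giving $4\jmp^2 + 2\jmp + 1 \leq 6\jmp^2 + 1$. Applying the bound $\jmp(\beta_{i-1})^2 \leq 2(i-1)$ (from the proof of Lemma 5.3 in \cite{maslen}, which applies here since every vertex $\beta_{i-1}$ of $\mathcal{B}^{i-1}$ is indexed by a partition of size at most $i-1$) yields the uniform estimate $4\jmp^2 + 2\jmp + 1 \leq 12(i-1)+1 = 12i-11$. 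The summation then collapses to $(12i-11)\sum_{\beta_{i-1}} d_{\beta_{i-1}}^2 = (12i-11)\dim(\mathcal{B}r_{i-1})$.

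Combining these estimates, part (2) yields
\[
\#\Hom(\mathcal{H}_i^i\uparrow G;\mathcal{B}) \leq \dim(\mathcal{B}r_{i-1})\left[2\frac{\dim(\mathcal{B}r_{i-1})}{\dim(\mathcal{B}r_{i-2})} + (12i-11)\right].
\]
Using $\dim(\mathcal{B}r_j) = (2j-1)!!$, the ratio $\dim(\mathcal{B}r_{i-1})/\dim(\mathcal{B}r_{i-2})$ equals $2i-3$, so the bracketed quantity simplifies to $2(2i-3) + (12i-11) = 16i-17$. Plugging back into the formula from part (1) and using $\dim(\mathcal{B}r_{n-1}) = \dim(\mathcal{B}r_n)/(2n-1)$ produces
\[
\#\Hom(\mathcal{H}_i^n\uparrow G;\mathcal{B}) \leq (16i-17)\dim(\mathcal{B}r_{n-1}) = \frac{16i-17}{2n-1}\dim(\mathcal{B}r_n),
\]
which is the desired bound.

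The only delicate point in this plan is verifying that the bound $\jmp(\beta)^2 \leq 2|\beta|$ from the symmetric group setting transfers cleanly to Brauer vertices, but this is automatic because $\mathcal{B}^{i-1}$ consists of partitions of sizes $i-1, i-3, i-5,\ldots$, so the worst case $|\beta|=i-1$ gives the uniform estimate needed. The rest is algebraic bookkeeping using the double-factorial structure of $\dim(\mathcal{B}r_n)$.
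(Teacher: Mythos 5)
Your proposal is correct and follows the same route as the paper: combine parts (1) and (2) of Lemma \ref{BrnH1} with the jump bound $\jmp(\beta)^2\leq 2|\beta|$, use $\sum_\beta d_\beta^2=\dim(\mathcal{B}r_{i-1})$ and the double-factorial ratios $\dim(\mathcal{B}r_{i-1})/\dim(\mathcal{B}r_{i-2})=2i-3$, $\dim(\mathcal{B}r_{n-1})=\dim(\mathcal{B}r_n)/(2n-1)$. You have simply made explicit the algebraic bookkeeping (e.g.\ $4\jmp^2+2\jmp+1\leq 6\jmp^2+1\leq 12i-11$) that the paper leaves to the reader, and your handling of the fact that Brauer vertices at level $i-1$ are partitions of size at most $i-1$ is exactly what the paper's citation of \cite{maslen} intends.
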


\section{Temperley-Lieb Algebra Combinatorial Lemmas}\label{Templiebapp}

Let $\mathcal{B}$ denote the Bratteli diagram associated to the chain of Temperley-Lieb algebras $\mathcal{T}_n>\mathcal{T}_{n-1}>\cdots \mathcal{T}_1>\mathcal{T}_0$ (Figure \ref{TnBratt}). 
The following two lemmas provide a bound for $\#\Hom (\mathcal{H}_i^n\uparrow G;\mathcal{B})$, for $\mathcal{H}_i^n$ as in Figure \ref{GnH1}.
\begin{lemma}\label{TnH1}
\begin{itemize}
\item[]
\item[(1)] $\displaystyle\#\Hom(\mathcal{H}_i^n\uparrow G;\mathcal{B})=\frac{ \dim(\mathcal{T}_{n-1})}{\dim(\mathcal{T}_{i-1})}\#\Hom(\mathcal{H}_i^i\uparrow G;\mathcal{B}),$
\item[(2)] $\#\Hom(\mathcal{H}_i^i\uparrow G;\mathcal{B})\leq \displaystyle\frac{\dim(\mathcal{T}_{i-1})^2}{\dim(\mathcal{T}_{i-2})}+\sum_{\beta_{i-1}\in\mathcal{B}^{i-1}}(\jmp(\beta_{i-1})^2(d_{\beta_{i-1}})^2.$
\end{itemize} 
\end{lemma}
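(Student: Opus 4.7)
The plan is to mirror the structure of the analogous Brauer lemma (Lemma A.1) as closely as possible, adapting each step to the Temperley-Lieb setting where the Bratteli diagram contains only ``add-a-box'' edges rather than the bidirectional add/remove edges of the Brauer Bratteli diagram.

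For part (1), I would start from the explicit sum giving $\#\Hom(\mathcal{H}_i^n\uparrow G;\mathcal{B})$ and observe that the vertex $\beta_{n-1}$ appears only through the factor $M(\beta_{n-1},\beta_{i-1})\,d_{\beta_{n-1}}$. Isolating and summing this out reduces part (1) to the Plancherel-type identity
$$\sum_{\beta_{n-1}\in\mathcal{B}^{n-1}} M(\beta_{n-1},\beta_{i-1})\,d_{\beta_{n-1}} \;=\; \frac{\dim(\mathcal{T}_{n-1})}{\dim(\mathcal{T}_{i-1})}\,d_{\beta_{i-1}},$$
which expresses the dimension of the induced module $\mathrm{Ind}_{\mathcal{T}_{i-1}}^{\mathcal{T}_{n-1}}(V_{\beta_{i-1}})$ in two ways (equivalently, it counts length-$n-1$ paths $\emptyset\to\beta_{n-1}$ refined by their level-$(i-1)$ vertex). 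Substituting this back and recognizing the remaining factor as exactly the sum defining $\#\Hom(\mathcal{H}_i^i\uparrow G;\mathcal{B})$ completes part (1); the argument is word-for-word the same as Lemma D.3 of \cite{sovII} since it only uses semisimplicity and the path interpretation of $d_{\beta_{n-1}}$.

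For part (2), I would split the sum according to whether $\alpha_{i-1}=\beta_{i-1}$ or $\alpha_{i-1}\neq\beta_{i-1}$. In the off-diagonal case, the crucial TL-specific observation is that two \emph{distinct} partitions at level $i-1$ have at most one common upper neighbor at level $i$: both must be obtainable from $\alpha_i$ by removing a single box, and in a two-row shape the two removed boxes are forced (one from each row). Hence $M(\alpha_i,\alpha_{i-1})M(\alpha_i,\beta_{i-1})\leq 1$, so summing over $\alpha_i$ simply drops this factor. The residual expression then splits as a product through $\beta_{i-2}$, and applying the identity from part (1) twice yields the bound
$$\sum_{\alpha_{i-1}\neq \beta_{i-1}} \;\leq\; \sum_{\beta_{i-2}}\Bigl(\sum_{\beta_{i-1}} M(\beta_{i-1},\beta_{i-2})\,d_{\beta_{i-1}}\Bigr)^{\!2} \;=\; \frac{\dim(\mathcal{T}_{i-1})^2}{\dim(\mathcal{T}_{i-2})}.$$
On the diagonal $\alpha_{i-1}=\beta_{i-1}$, multiplicity-freeness collapses $M(\cdot,\cdot)^2$ to $M(\cdot,\cdot)$, so the sum becomes
$$\sum_{\beta_{i-1}}\Bigl(\sum_{\beta_{i-2}} M(\beta_{i-1},\beta_{i-2})\Bigr)\Bigl(\sum_{\alpha_i} M(\alpha_i,\beta_{i-1})\Bigr) d_{\beta_{i-1}}^2,$$
and each inner sum counts, respectively, the removable and addable boxes of $\beta_{i-1}$ within the two-row constraint, each bounded by $\jmp(\beta_{i-1})$.

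The main obstacle will be the diagonal half of part (2): one must verify that the addable-box count for two-row partitions is actually majorized by $\jmp(\beta_{i-1})$ (nominally the removable-box count). This requires a small case analysis on the shape of $\beta_{i-1}$ --- the problematic cases being single-row partitions $(\lambda_1,0)$, where there are two addable boxes but only one removable, forcing one to either re-interpret $\jmp$ in the TL setting as the Bratteli-diagram edge degree or to subsume the discrepancy into the off-diagonal bound. The off-diagonal half, by contrast, should be essentially mechanical once the ``at most one common cover'' observation is in place.
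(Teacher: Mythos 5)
Your overall strategy---transplanting the $S_n$ argument (Lemma 5.2 of \cite{maslen}, Lemma D.3 of \cite{sovII}) onto the Temperley--Lieb Bratteli diagram---is exactly what the paper does, since its proof is the one-line claim that Maslen's lemma goes through with $|S_k|$ replaced by $\dim\mathcal{T}_k$. But the identity you build both parts on,
$$\sum_{\beta_{n-1}\in\mathcal{B}^{n-1}} M(\beta_{n-1},\beta_{i-1})\,d_{\beta_{n-1}}=\frac{\dim(\mathcal{T}_{n-1})}{\dim(\mathcal{T}_{i-1})}\,d_{\beta_{i-1}},$$
is false in the Temperley--Lieb tower, and it does not follow from ``semisimplicity and the path interpretation'' alone. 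In the group case it comes from $\mathbb{C}[G]$ being a \emph{free} $\mathbb{C}[H]$-module of rank $[G:H]$; there is no Lagrange-type fact for $\mathcal{T}_{i-1}<\mathcal{T}_{n-1}$, and the ratio $\dim(\mathcal{T}_{n-1})/\dim(\mathcal{T}_{i-1})$ need not even be an integer while the left-hand side always is. Concretely, for $i-1=2$, $n-1=3$: with $\beta_2=(2)$ the left side is $d_{(3)}+d_{(2,1)}=1+2=3$, with $\beta_2=(1,1)$ it is $2$, while the right side is $\tfrac{5}{2}d_{\beta_2}$ in both cases. Since the induced-dimension ratio depends on the vertex, summing out $\beta_{n-1}$ does not produce a constant factor, and your proof of part (1) collapses. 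Indeed no argument of this shape can work: using the paper's own path-counting formula one gets $\#\Hom(\mathcal{H}_3^4\uparrow G;\mathcal{B})=13$ but $\tfrac{\dim\mathcal{T}_3}{\dim\mathcal{T}_2}\,\#\Hom(\mathcal{H}_3^3\uparrow G;\mathcal{B})=\tfrac{5}{2}\cdot 5=12.5$, so the equality in (1) cannot be recovered this way and would have to be replaced by something vertex-dependent (e.g. a bound involving $\max_{\beta}\bigl(\sum_{\beta_{n-1}}M(\beta_{n-1},\beta)d_{\beta_{n-1}}\bigr)/d_{\beta}$).

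The same false identity reappears in your off-diagonal estimate for part (2), and there the needed inequality actually reverses: by Cauchy--Schwarz, $\sum_{\beta_{i-2}}\bigl(\sum_{\beta_{i-1}}M(\beta_{i-1},\beta_{i-2})d_{\beta_{i-1}}\bigr)^{2}\geq \dim(\mathcal{T}_{i-1})^2/\dim(\mathcal{T}_{i-2})$, with equality only when the per-vertex ratios are constant (at levels $2,3$ the two sides are $13$ and $12.5$), so your chain of estimates does not reach the stated right-hand side. Finally, the diagonal obstacle you flag is real and left unresolved: in the TL diagram a one-row shape $(k)$ has one lower neighbour but two upper neighbours, so the diagonal contribution $\sum(\#\text{lower})(\#\text{upper})\,d_{\beta_{i-1}}^2$ is not majorized by $\sum\jmp(\beta_{i-1})^2 d_{\beta_{i-1}}^2$ (already $3$ versus $2$ at $i=3$); ``re-interpreting $\jmp$'' or ``subsuming the discrepancy'' would change the statement rather than prove it. In short, you identified the intended template correctly, but the proposal has genuine gaps precisely where the group-algebra inputs (freeness/index, addable--removable symmetry of full Young's lattice) have no Temperley--Lieb analogue; a correct treatment must redo the path counting with TL-specific estimates rather than import the $S_n$ identities verbatim.
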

\begin{proof}
This is exactly Lemma 5.2 of \cite{maslen}, replacing the order of the symmetric group with the dimension of the Temperley-Lieb algebra.
\end{proof}
Combining Lemma \ref{BrnH1} with the fact that $\jmp(\beta_i)^2\leq 2i$ 
\begin{corollary}\label{TnH3} 
$\#\Hom(\mathcal{H}_i^n\uparrow G;\mathcal{B})\leq \frac{(4i-6+2i^2)(n+1)(n)}{i(2n)(2n-1)}\dim(\mathcal{T}_n)$.
\end{corollary}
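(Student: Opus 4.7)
The plan is to combine parts (1) and (2) of Lemma \ref{TnH1} with the bound $\jmp(\beta_{i-1})^2\le 2i$ (stated in the corollary, and whose proof is that of \cite{maslen}[Lemma 5.3]: for a partition $\beta_{i-1}$ of $i-1$, the square of the number of removable boxes is bounded above by $2i$), and then to rewrite the resulting expression in terms of $\dim(\mathcal T_n)$ using explicit Catalan-number ratios.

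First I would bound the ``diagonal'' sum in Lemma \ref{TnH1}(2) by pulling the jump out:
\[
\sum_{\beta_{i-1}\in\mathcal B^{i-1}} \jmp(\beta_{i-1})^2\,(d_{\beta_{i-1}})^2 \;\le\; 2i\sum_{\beta_{i-1}\in\mathcal B^{i-1}}(d_{\beta_{i-1}})^2 \;=\; 2i\,\dim(\mathcal T_{i-1}),
\]
where the last equality is the Wedderburn identity for the semisimple algebra $\mathcal T_{i-1}$. Plugging this into Lemma \ref{TnH1}(2) and then multiplying by the prefactor from Lemma \ref{TnH1}(1) yields
\[
\#\Hom(\mathcal H_i^n\uparrow G;\mathcal B) \;\le\; \dim(\mathcal T_{n-1})\left[\frac{\dim(\mathcal T_{i-1})}{\dim(\mathcal T_{i-2})}+2i\right].
\]

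Next I would convert the remaining dimension ratios into closed form using Catalan numbers. Since $\dim(\mathcal T_k)=C_k$ and $C_k/C_{k-1}=2(2k-1)/(k+1)$, one gets
\[
\frac{\dim(\mathcal T_{i-1})}{\dim(\mathcal T_{i-2})}=\frac{2(2i-3)}{i}, \qquad \frac{\dim(\mathcal T_{n-1})}{\dim(\mathcal T_n)}=\frac{n(n+1)}{(2n)(2n-1)}.
\]
Simplifying the bracket,
\[
\frac{2(2i-3)}{i}+2i \;=\; \frac{4i-6+2i^2}{i},
\]
and multiplying by the second ratio times $\dim(\mathcal T_n)$ gives exactly
\[
\#\Hom(\mathcal H_i^n\uparrow G;\mathcal B)\;\le\;\frac{(4i-6+2i^2)(n+1)(n)}{i\,(2n)(2n-1)}\,\dim(\mathcal T_n),
\]
as claimed.

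The main ``obstacle'' here is not really an obstacle but a bookkeeping point: one must be careful that the universal bound $\jmp^2\le 2i$ (which is sharp for general partitions) is what drives the quadratic-in-$i$ growth of the estimate. In fact, for Temperley--Lieb one even has $\jmp\le 2$, so tighter estimates are possible; using the weaker bound keeps the argument parallel to the Brauer case and is what produces the cubic-in-$n$ overall count quoted in Theorem \ref{templiebthm}. The remaining work is purely algebraic simplification of the Catalan ratios, which mirrors the analogous step in the Brauer algebra appendix.
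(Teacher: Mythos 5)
Your proposal is correct and follows essentially the same route as the paper: combine Lemma \ref{TnH1}(1)--(2) with the bound $\jmp(\beta_{i-1})^2\le 2i$ (and $\sum_\beta d_\beta^2=\dim(\mathcal{T}_{i-1})$), then simplify via the Catalan ratios $C_k/C_{k-1}=2(2k-1)/(k+1)$, which is exactly the computation the paper leaves implicit in its one-line proof.
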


\bibliographystyle{abbrv}
\bibliography{SOVIIbib}

\def\cprime{$'$}
\begin{thebibliography}{10}

\bibitem{baum}
U.~Baum.
\newblock Existence and efficient construction of fast {F}ourier transforms for
  supersolvable groups.
\newblock {\em Comput. Complex.}, 1(3):235--256, 1991.

\bibitem{baumclausentietz}
U.~Baum, M.~Clausen, and B.~Tietz.
\newblock Improved upper complexity bounds for the discrete {F}ourier
  transform.
\newblock {\em Appl. Algebra Engrg. Comm. Comput.}, 2(1):35--43, 1991.

\bibitem{BenkhartShaderRam}
G.~Benkart, A.~Ram, and C.~Shader.
\newblock Tensor product representations for orthosymplectic {L}ie
  superalgebras.
\newblock {\em J. Pure Appl. Algebra}, 130(1):1--48, 1998.

\bibitem{beth}
T.~Beth.
\newblock On the computational complexity of the general discrete {F}ourier
  transform.
\newblock {\em Theoret. Comput. Sci.}, 51(3):331--339, 1987.

\bibitem{birmanwenzl}
J.~Birman and H.~Wenzl.
\newblock Braids, link polynomials and a new algebra.
\newblock {\em Trans. Amer. Math. Soc.}, 313(1):249--273, 1989.

\bibitem{bracewell}
R.~N. Bracewell.
\newblock {\em The Fourier Transformation and its Applications}.
\newblock Mc.~Graw-Hill, second edition, 1978.

\bibitem{algebraiccomplexity}
P.~B{\"u}rgisser, M.~Clausen, and M.~Shokrollahi.
\newblock {\em Algebraic {C}omplexity {T}heory}, volume 315 of {\em Grundlehren
  der Mathematischen Wissenschaften [Fundamental Principles of Mathematical
  Sciences]}.
\newblock Springer-Verlag, Berlin, 1997.
\newblock With the collaboration of Thomas Lickteig.

\bibitem{clausen}
M.~Clausen.
\newblock Fast generalized {F}ourier transforms.
\newblock {\em Theoret. Comput. Sci.}, 67(1):55--63, 1989.

\bibitem{cooleytukey}
J.~Cooley and J.~Tukey.
\newblock An algorithm for the machine calculation of complex {F}ourier series.
\newblock {\em Math. Comp.}, 19:297--301, 1965.

\bibitem{cooley}
J.~W. Cooley.
\newblock The re-discovery of the fast fourier transform algorithm.
\newblock {\em Mikrochimica Acta}, III:33--45, 1987.

\bibitem{diaconis-fft}
P.~Diaconis.
\newblock Average running time of the fast {F}ourier transform.
\newblock {\em J. Algorithms}, 1:187--208, 1980.

\bibitem{diaconisspec}
P.~Diaconis.
\newblock A generalization of spectral analysis with application to ranked
  data.
\newblock {\em Ann. Statist.}, 17(3):949--979, 1989.

\bibitem{diaram}
P.~Diaconis and A.~Ram.
\newblock Analysis of systematic scan {M}etropolis algorithms using
  {I}wahori-{H}ecke algebra techniques.
\newblock {\em Michigan Math. J.}, 48:157--190, 2000.

\bibitem{diarock}
P.~Diaconis and D.~Rockmore.
\newblock Efficient computation of the {F}ourier transform on finite groups.
\newblock {\em J. Amer. Math. Soc.}, 3(2):297--332, 1990.

\bibitem{rao}
D.~Elliott and K.~Rao.
\newblock {\em Fast {T}ransforms: {A}lgorithms, {A}nalyses, {A}pplications}.
\newblock Academic Press Inc. [Harcourt Brace Jovanovich Publishers], New York,
  1982.

\bibitem{halverson}
D.~Flath, T.~Halverson, and K.~Herbig.
\newblock The planar rook algebra and {P}ascal's triangle.
\newblock {\em Enseign. Math. (2)}, 55(1-2):77--92, 2009.

\bibitem{towers}
F.~Goodman, P.~de~la Harpe, and V.~Jones.
\newblock {\em Coxeter {G}raphs and {T}owers of {A}lgebras}, volume~14 of {\em
  Mathematical Sciences Research Institute Publications}.
\newblock Springer-Verlag, New York, 1989.

\bibitem{GoodmanHauschild}
F.~Goodman and H.~Hauschild.
\newblock Affine {B}irman-{W}enzl-{M}urakami algebras and tangles in the solid
  torus.
\newblock {\em Fund. Math.}, 190:77--137, 2006.

\bibitem{goodmanwenzl}
F.~M. Goodman and H.~Wenzl.
\newblock The {T}emperley-{L}ieb algebra at roots of unity.
\newblock {\em Pacific J. Math.}, 161(2):307--334, 1993.

\bibitem{grood}
C.~Grood.
\newblock The rook partition algebra.
\newblock {\em J. Combin. Theory Ser. A}, 113(2):325--351, 2006.

\bibitem{halver}
T.~Halverson and E.~delMas.
\newblock Representations of the {R}ook-{B}rauer algebra.
\newblock {\em Comm. Algebra}, 42(1):423--443, 2014.

\bibitem{hal}
T.~Halverson and A.~Ram.
\newblock Characters of algebras containing a {J}ones basic construction: the
  {T}emperley-{L}ieb, {O}kada, {B}rauer, and {B}irman-{W}enzl algebras.
\newblock {\em Adv. Math.}, 116(2):263--321, 1995.

\bibitem{braidgroups}
C.~Kassel and V.~Turaev.
\newblock {\em Braid groups}, volume 247 of {\em Graduate Texts in
  Mathematics}.
\newblock Springer, New York, 2008.
\newblock With the graphical assistance of Olivier Dodane.

\bibitem{kondortracking}
R.~Kondor.
\newblock Non-commutative harmonic analysis in multi-object tracking.
\newblock In {\em Bayesian time series models}, pages 277--294. Cambridge Univ.
  Press, Cambridge, 2011.

\bibitem{lafferty1992}
J.~D. Lafferty and D.~Rockmore.
\newblock Fast fourier analysis for {${\rm SL}\sb 2$} over a finite field and
  related numerical experiments.
\newblock {\em Experiment. Math.}, 1(2):115--139, 1992.

\bibitem{leduc-ram}
R.~Leduc and A.~Ram.
\newblock A ribbon {H}opf algebra approach to the irreducible representations
  of centralizer algebras: The {B}rauer,{B}irman-{W}enzl, and type {A}
  {I}wahori-{H}ecke algebras.
\newblock {\em Adv. Math.}, 125:1--94, 1997.

\bibitem{Malandro1}
M.~E. Malandro.
\newblock Fast {F}ourier transforms for finite inverse semigroups.
\newblock {\em J. Algebra}, 324(2):282--312, 2010.

\bibitem{Malandro2}
M.~E. Malandro.
\newblock Inverse semigroup spectral analysis for partially ranked data.
\newblock {\em Appl. Comput. Harmon. Anal.}, 35(1):16--38, 2013.

\bibitem{MalandroRockmore}
M.~E. Malandro and R.~D. N.
\newblock Fast {F}ourier transforms for the rook monoid.
\newblock {\em Trans. Amer. Math. Soc.}, 362(2):1009--1045, 2010.

\bibitem{maslen}
D.~Maslen.
\newblock The efficient computation of {F}ourier transforms on the symmetric
  group.
\newblock {\em Math. Comp.}, 67(223):1121--1147, 1998.

\bibitem{rockmassurvey}
D.~Maslen and D.~Rockmore.
\newblock Generalized {FFT}s---a survey of some recent results.
\newblock In {\em Groups and computation, {II} ({N}ew {B}runswick, {NJ},
  1995)}, volume~28 of {\em DIMACS Ser. Discrete Math. Theoret. Comput. Sci.},
  pages 183--237. Amer. Math. Soc., Providence, RI, 1997.

\bibitem{sovi}
D.~Maslen and D.~Rockmore.
\newblock Separation of variables and the computation of {F}ourier transforms
  on finite groups. {I}.
\newblock {\em J. Amer. Math. Soc.}, 10(1):169--214, 1997.

\bibitem{MR-duco}
D.~Maslen and D.~Rockmore.
\newblock Double coset decompositions and computational harmonic analysis on
  groups.
\newblock {\em Journal of Fourier Analysis and Applications}, 6(4):349--388,
  2000.

\bibitem{MaslenNotices}
D.~Maslen and D.~Rockmore.
\newblock The {C}ooley-{T}ukey {FFT} and group theory.
\newblock {\em Notices of the Amer. Math. Soc.}, 48(10):1151--1160, 2001.

\bibitem{sovII}
D.~K. Maslen, D.~N. Rockmore, and S.~Wolff.
\newblock Separation of variables and the computation of {F}ourier transforms
  on finite groups. {II}.
\newblock https://arxiv.org/abs/1512.02445, under review.

\bibitem{wasser}
H.~Morton and A.~Wasserman.
\newblock A basis for the {B}irman-{W}enzl algebra.
\newblock page 29 pp., 1989, revised 2000.
\newblock unpublished manuscript, ar{X}iv:1012.3116.

\bibitem{munthekaas}
H.~Z. Munthe-Kaas.
\newblock On group {F}ourier analysis and symmetry preserving discretizations
  of {PDE}s.
\newblock {\em J. Phys. A}, 39(19):5563--5584, 2006.

\bibitem{murakami}
J.~Murakami.
\newblock The {K}auffman polynomial of links and representation theory.
\newblock {\em Osaka J. Math.}, 24(4):745--758, 1987.

\bibitem{ram}
A.~Ram.
\newblock {\em Representation {T}heory and {C}haracter {T}heory of
  {C}entralizer {A}lgebras}.
\newblock ProQuest LLC, Ann Arbor, MI, 1991.
\newblock Thesis (Ph.D.)--University of California, San Diego.

\bibitem{seminormal}
A.~Ram.
\newblock Seminormal representations of {W}eyl groups and {I}wahori-{H}ecke
  algebras.
\newblock {\em Proc. London Math. Soc. (3)}, 75(1):99--133, 1997.

\bibitem{ridout}
D.~Ridout and Y.~Saint-Aubin.
\newblock Standard modules, induction and the structure of the
  {T}emperley-{L}ieb algebra.
\newblock {\em Adv. Theor. Math. Phys.}, 18(5):957--1041, 2014.

\bibitem{rockmoreabext}
D.~Rockmore.
\newblock Fast {F}ourier analysis for abelian group extensions.
\newblock {\em Adv. in Appl. Math.}, 11(2):164--204, 1990.

\bibitem{rocksurvey}
D.~Rockmore.
\newblock Some applications of generalized {FFT}s.
\newblock In {\em Groups and computation, {II} ({N}ew {B}runswick, {NJ},
  1995)}, volume~28 of {\em DIMACS Ser. Discrete Math. Theoret. Comput. Sci.},
  pages 329--369. Amer. Math. Soc., Providence, RI, 1997.

\bibitem{RockmoreIEEE}
D.~Rockmore.
\newblock The {FFT}: An algorithm the whole family can use.
\newblock {\em Computing in Science and Eng.}, 2(1):60--64, Jan. 2000.

\bibitem{rui}
H.~Rui.
\newblock A criterion on the semisimple {B}rauer algebras.
\newblock {\em J. Combin. Theory Ser. A}, 111(1):78--88, 2005.

\bibitem{wenzl}
H.~Wenzl.
\newblock On the structure of {B}rauer's centralizer algebras.
\newblock {\em Ann. of Math. (2)}, 128(1):173--193, 1988.

\bibitem{wolff}
S.~Wolff.
\newblock Asymptotic growth of associated primes of certain graph ideals.
\newblock {\em Comm. Algebra}, 42(5):2114--2125, 2014.

\bibitem{wood}
J.~Wood.
\newblock Some applications of the {F}ourier transform in algebraic coding
  theory.
\newblock In {\em Algebra for secure and reliable communication modeling},
  volume 642 of {\em Contemp. Math.}, pages 1--40. Amer. Math. Soc.,
  Providence, RI, 2015.

\end{thebibliography}

\end{document}